\theoremstyle{plain}
\newtheorem{definition}{Definition}
\newtheorem{theorem}[definition]{Theorem}
\newtheorem*{theorem*}{Theorem}
\newtheorem{remark}[definition]{Remark}
\newtheorem*{remark*}{Remark}
\newtheorem*{sideremark*}{Side Remark}
\newtheorem*{claim*}{Claim}
\newtheorem*{q*}{Question}
\newtheorem{lemma}[definition]{Lemma}
\newtheorem*{corollary*}{Corollary}
\newcommand{\R}{\mathbb{R}}
\newcommand{\na}{\nabla}
\newcommand{\emb}{\hookrightarrow}
\newcommand{\p}{\partial}
\newcommand{\loc}{{\rm loc}}
\newcommand{\weak}{\rightharpoonup}
\newcommand{\e}{\varepsilon}
\newcommand{\G}{\Gamma}
\newcommand{\linf}{{L^\infty}}
\newcommand{\two}{{\rm II}}
\newcommand{\M}{{\mathcal{M}}}
\newcommand{\bra}{\left\langle}
\newcommand{\ket}{\right\rangle}
\newcommand{\rnk}{{\R^{n+k}}}
\newcommand{\nap}{{\na^\perp}}
\newcommand{\sol}{\mathfrak{S}}
\newcommand{\so}{{\mathfrak{so}}}
\newcommand{\sonk}{\so{(n+k)}}
\newcommand{\ball}{{\bf B}}
\def\XXint#1#2#3{{\setbox0=\hbox{$#1{#2#3}{\int}$ }
\vcenter{\hbox{$#2#3$ }}\kern-.6\wd0}}
\newcommand{\X}{{\mathcal{X}}}
\title{Some recent developments on isometric immersions via compensated compactness and gauge transforms}
\author{Siran Li}
\address{Siran Li: School of Mathematical Sciences $\&$ CMA-Shanghai, Shanghai Jiao Tong University, No.~6 Science Buildings,
800 Dongchuan Road, Minhang District, Shanghai, China (200240)}
\email{\texttt{siran.li@sjtu.edu.cn}}
\keywords{Isometric immersions, nonlinear elasticity, Gauss--Codazzi--Ricci equations, compensated compactness, gauge theory.}
\subjclass[2020]{35M10, 35M30, 74B20, 53Z05, 35R01, 53C24, 53A07, 53C42, 53Z05}
\date{\today}
\begin{document}

\maketitle
\begin{center}
\emph{Dedicated to Prof.~Gui-Qiang G. Chen on the occasion of his $60^{\text{th}}$ birthday,\\ with gratitude and admiration}
\end{center}

\begin{abstract}
We survey recent developments on the analysis of Gauss--Codazzi--Ricci equations, the first-order PDE system arising from the classical problem of isometric immersions in differential geometry, especially in the regime of low Sobolev regularity. Such equations are not purely elliptic, parabolic, or hyperbolic in general, hence calling for analytical tools for PDEs of mixed types. We discuss various recent contributions --- in line with the pioneering works by G.-Q. Chen, M. Slemrod, and D. Wang [\emph{Proc. Amer. Math. Soc.} (2010); \emph{Comm. Math. Phys.} (2010)] --- on the weak continuity of Gauss--Codazzi--Ricci equations, the weak stability of isometric immersions, and the fundamental theorem of submanifold theory with low regularity. Two mixed-type PDE techniques are emphasised throughout these developments: the method of compensated compactness and the theory of Coulomb--Uhlenbeck gauges. 
\end{abstract}

\section{Introduction}\label{sec: intro}
\subsection{Isometric immersions}
Isometric immersions and/or embeddings of Riemannian manifolds have long been a central  topic in global analysis and geometric PDEs. Studies on the existence, uniqueness, stability, and regularity of isometric immersions (embeddings) of Riemannian manifolds into Euclidean spaces or general ambient manifolds abound in the literature. These works have partially shaped the landscapes of analysis, geometry, and PDE nowadays, and have found wide-ranged, in-depth applications in mathematical relatively, continuum mechanics, and mathematical biology, etc. Let us  mention here the groundbreaking works of historical significance by E.~Cartan \cite{cartan}, Weyl \cite{weyl}, Aleksandroff \cite{ale},  Nirenberg \cite{nir},  Nash \cite{nashC1, nashCk}, and Pogorelov \cite{pog}; this list is by no means exhaustive. We also refer the reader to Han--Hong \cite{hh} for a comprehensive survey, and the problem section in Yau  \cite{yau}  for related questions.

This contribution focuses on isometric immersions of weak regularities. Consider a compact Riemannian manifold $(\M,g)$ of dimension $n \geq 2$. A mapping $\Phi: \M \to \rnk$ is an immersion if $d \Phi: T\M \to T\rnk$ is injective and an isometry if $\Phi^\# \delta =g$, where $\delta$ denotes the Euclidean metric on $\rnk$ and $\Phi^\#$ the pullback under $\Phi$. By ``weak regularity'' we usually mean $\Phi \in W^{2,p}$ and $g \in W^{1,p} \cap L^\infty$ with some $p \in [1,\infty]$. We emphasise the PDE approach throughout: all the results discussed in this survey are obtained by analysing weak (distributional) solutions to the Gauss--Codazzi--Ricci equations and their variants associated to the isometric immersions. 

\subsection{The PDE system}
Gauss--Codazzi--Ricci equations are the compatibility equations of curvatures for the existence of an isometric immersion $\Phi:(\M^n,g)\to\rnk$. In geometrical terms, the \emph{intrinsic geometry} of $\Phi$ is given by the metric $g$, while the \emph{extrinsic geometry} is given by the second fundamental form $\two$ % \G(T\M) \times \G(T\M) \to \G(T^\perp \M)$ 
and the normal connection $\nap$ (\emph{i.e.}, the orthogonal projection of the Levi-Civita connection on $\rnk$ to the normal bundle of $\Phi$, which is trivial when the codimension $k=1$). The curvature components in the tangential and normal directions of $\Phi$ together constitute the flat Riemann curvature on $\rnk$. This gives rise to the  \emph{Gauss--Codazzi--Ricci} equations associated to $\Phi:(\M^n,g)\to\rnk$ \cite{doc}:
\begin{eqnarray}
&& \delta\big(\two(X,Z), \two (Y,W)\big) - \delta\big( \two(X,W),\two(Y,Z) \big) = R(X,Y,Z,W),\label{gauss}\\
&& \overline{\na}_Y\two(X,Z) - \overline{\na}_X\two(Y,Z)=0,\label{codazzi}\\
&& g\big([\mathcal{S}_\eta, \mathcal{S}_{\zeta}] X, Y\big) = R^E(X,Y,\eta,\zeta)\label{ricci},
\end{eqnarray}
for any $X,Y,Z,W \in \G(T\M)$ and $\eta,\zeta \in \G(E)$. Here $[\bullet,\bullet]$ is the commutator, $\delta$ the Euclidean metric, $\mathcal{S}$ the shape operator (equivalent to $\two$ modulo contractions by $g$), $R$ and $R^E$ the Riemann curvature tensors for $(T\M,g)$ and $\left(E,g^E\right)$, respectively, and $\overline{\na}$ the Levi-Civita connection on $\R^{n+k}$. For the moment we take $E = T^\perp\M := T\rnk \slash T[\Phi(\M)]$, the normal bundle of $\Phi$.

The Gauss--Codazzi--Ricci Equations~\eqref{gauss}--\eqref{ricci} are a first-order nonlinear PDE system with zeroth-order quadratic nonlinearities. Given any local co-ordinate system $\{x^1, \ldots, x^{n+k}\}$, set
\begin{equation}\label{h and kappa}
h^\alpha_{ij}:=\langle\two(\p_i,\p_j),\p_\alpha\rangle\qquad\text{and}\qquad \kappa^\alpha_{i\beta}:=\left\langle\na^\perp_{\p_i}\p_\beta,\p_\alpha\right\rangle.
\end{equation}
Here and hereafter, we adopt the index convention: 
\begin{equation*}
1\leq i,j,k,\ell,p,q \leq n; \qquad n+1 \leq \alpha,\beta,\gamma \leq n+k;\qquad 1\leq a,b,c \leq n+k,
\end{equation*}
and write $\bra\bullet,\bullet\ket$ for both $g$ and $\delta$ (as $\Phi$ is an isometry). Then, with Einstein's summation convention, the Gauss--Codazzi--Ricci Equations~\eqref{gauss}--\eqref{ricci} are expressed locally as follows:
\begin{equation}\label{gauss, loc}
g_{\alpha\beta}\left(h^\alpha_{ji}h^\beta_{k\ell}-h^\alpha_{ki}h^\beta_{j\ell}\right)=R_{ijk\ell},
\end{equation}
\begin{equation}\label{codazzi, loc}
\frac{\p h^\alpha_{j\ell}}{\p x^k} - \frac{\p h^\alpha_{kj}}{\p x^\ell} = \Gamma^m_{kj} h^\alpha_{\ell p} - \G^p_{\ell j} h^\alpha_{kp} + \kappa^\alpha_{\ell \beta} h^\beta_{kj} - \kappa^\alpha_{k\beta} h^\beta_{\ell j},
\end{equation}
and
\begin{equation}\label{ricci, loc}
\frac{\p\kappa^\alpha_{\ell \beta}}{\p x^k} - \frac{\p \kappa^\alpha_{k\beta}}{\p x^\ell} = g^{pq} \left[ h^\alpha_{p\ell} h^\beta_{kq} - h^\alpha_{pk} h^\beta_{\ell q} \right] - \kappa^\alpha_{k\gamma}\kappa^\gamma_{\ell \beta} + \kappa^\alpha_{\ell \gamma} \kappa^\gamma_{k\beta}.
\end{equation}
Without further restrictions on curvatures, this PDE system has no fixed type --- not purely elliptic, parabolic, or hyperbolic in general.

The analysis of Gauss--Codazzi(--Ricci) equations is a classical and challenging topic in PDE theory. Most of the known results in this direction are restricted to the case $(n,k)=(2,1)$ with positive Gaussian curvature $K_\M$, whence the Gauss--Codazzi equations are equivalent to a fully nonlinear elliptic Monge--Amp\`{e}re equation \cite{nir}. The cases of nonpositive or sign-changing $K_\M$ (\emph{i.e.}, when the Gauss--Codazzi equations are hyperbolic or mixed-type PDEs) and of higher dimensions and/or codimensions, despite sporadic breakthroughs \cite{chhw, ccswy}, remain elusive in the large. See \cite{hh} for an extensive account of  historical  and current developments.

\subsection{Intrinsic approach to elasticity and fundamental theorem of submanifold theory}

The study of isometric immersions of weak regularities derives motivations from and, in turn, has brought about novel insights to many fields of applied sciences. In elasticity theory, for example, one  major objective is to determine the deformation undergone by an elastic body in response to external forces and boundary conditions. The elastic body and deformation are modelled respectively by a manifold $\M^n$ and a mapping $\Phi: {\M}^n \to \rnk$. The \emph{intrinsic approach to nonlinear elasticity}, pioneered by Antman \cite{antman} and Ciarlet \cite{ciarlet, ciarlet-new}, recasts the problems concerning the deformation $\Phi$ to those concerning the Cauchy--Green tensor $g=\Phi^\#\delta$. %Our survey will discuss applications of isometric immersions to rigidity problems in nonlinear elasticity; \emph{cf}. \cite{elast1, elast2, cgm, new1, new2, cdm, new-hlp, new-l, kms, new-lmp, m05, m07, res1, res2} and many others. 

The intrinsic approach to nonlinear elasticity is based on the fundamental theorem of surface theory (Ciarlet--Gratie--Mardare \cite{cgm};\emph{cf}. \cite{ciarlet', b, add, add'} too). It ascertains that, under suitable topological and regularity conditions,\footnote{More precisely, suppose for instance that the domain is simply-connected, the metric $g$ is of $W^{1,p}_\loc \cap L^\infty_\loc$-regularity, and the second fundamental form $\two$ is of $L^p_\loc$-regularity with $p > n$.} the deformation $\Phi: {\M}^n \to \rnk$ for $n=2$, $k=1$ can be recovered from the Cauchy--Green tensor $g$ and the second fundamental form $\two$. This result admits a natural extension to isometric immersions of arbitrary dimension $n$ and codimension $k$, which we term as the ``\emph{fundamental theorem of submanifold theory}'':\footnote{One subtlety lurks here: before establishing the existence of the isometric immersion, one cannot talk about the ``normal directions''. This is why we consider the Gauss--Codazzi--Ricci equations on an abstract rank-$k$ bundle $E$: it serves as the putative normal bundle here.}
\begin{quotation}
\emph{Given a weak solution $\mathfrak{S}=\left(\two,\na^\perp\right)$ to the Gauss--Codazzi--Ricci equations on a Riemannian manifold $(\M^n,g)$ and a rank-$k$ vector bundle $\pi: E\to \M$, one may find an isometric immersion $\Phi: \M \to \rnk$ whose normal bundle $T^\perp[\Phi(\M)]$ and extrinsic geometry coincide with  $E$ and $\mathfrak{S}$, respectively.} 
\end{quotation}

\subsection{Two papers by Chen--Slemrod--Wang: weak continuity and existence of isometric immersions via compensated compactness}
G.-Q. Chen, M. Slemrod, and D. Wang (2010) published two papers \cite{csw, csw'} on the stability and existence of Gauss--Codazzi--Ricci equations and isometric immersions, which are the  starting point of many subsequent developments surveyed here. With their expertise in conservation laws and compressible fluid dynamics, the authors took a fresh look at this geometric problem and exploited one of the central tools for their fields, the div-curl lemma from the theory of compensated compactness, thus obtaining:

\begin{theorem}[Chen--Slemrod--Wang \cite{csw, csw'}]\label{thm: csw}
The following holds for the Gauss--Codazzi--Ricci equations and/or isometric immersions.
\begin{enumerate}
    \item Assume that $\{\sol^\e\}$ is a sequence of approximate solutions to the Gauss--Codazzi--Ricci equations with arbitrary dimension $n \geq 2$ and codimension $k \geq 1$. If $\{\sol^\e\}$ is bounded in $L^p_\loc$ for $p > 2$, then any of its weak subsequential limits is a weak solution to the Gauss--Codazzi--Ricci equations. 
    \item  Several families of surface metrics with negatively Gaussian curvature admit \emph{global} $C^{1,1}$-isometric immersions into $\R^3$.
\end{enumerate}    
\end{theorem}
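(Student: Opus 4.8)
The plan is to establish the two assertions by related but distinct uses of the div-curl lemma. For Part (1), the core difficulty is that weak convergence fails to commute with the quadratic nonlinearities in \eqref{gauss, loc} and on the right-hand side of \eqref{ricci, loc}: even if $\sol^\e = (h^\e,\kappa^\e) \weak (h,\kappa)$ in $L^p_\loc$, one cannot directly conclude $h^\e\otimes h^\e \weak h\otimes h$ distributionally. I would therefore begin by exposing the div-curl structure concealed in the system. The Codazzi Equations~\eqref{codazzi, loc} and the Ricci Equations~\eqref{ricci, loc} are both of \emph{curl type}: their left-hand sides are antisymmetrised first derivatives of $h^\alpha$ and of $\kappa^\alpha$ in the pair of indices $(k,\ell)$. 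Reading these equations as constraints, the curls of the fields assembled from $h^\e$ and $\kappa^\e$ are governed by the zeroth-order quadratic terms on the right, which, since $\{\sol^\e\}$ is bounded in $L^p_\loc$ with $p>2$, are bounded in $L^{p/2}_\loc$ with exponent $p/2>1$. Together with the approximate-solution hypothesis --- that the residuals obtained by inserting $\sol^\e$ into \eqref{gauss, loc}--\eqref{ricci, loc} vanish in a negative Sobolev norm --- this places the relevant curls in a compact subset of $W^{-1,s}_\loc$ for a suitable $s$.

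The next step is to pair each curl-controlled field against a companion field of controlled divergence, arranged so that the inner product reproduces exactly the quadratic expression appearing in the Gauss equation (and in Ricci). One then invokes the $L^p$-version of the div-curl lemma: if $u^\e\weak u$ in $L^p_\loc$ and $v^\e \weak v$ in $L^q_\loc$ with $\tfrac{1}{p}+\tfrac{1}{q}\le 1$, while $\mathrm{div}\, u^\e$ and $\mathrm{curl}\, v^\e$ are precompact in the dual Sobolev spaces, then $\langle u^\e,v^\e\rangle \weak \langle u,v\rangle$ in $\mathcal{D}'$. Taking $p=q$ here forces $2/p\le 1$, i.e.\ $p\ge 2$; the strict threshold $p>2$ is exactly what guarantees that the quadratic products live in the reflexive space $L^{p/2}_\loc$ and renders the compactness hypotheses verifiable. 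Applying the lemma componentwise lets every quadratic term pass to the weak limit, so that $(h,\kappa)$ satisfies \eqref{gauss, loc}--\eqref{ricci, loc} distributionally. The main obstacle here is not the analysis but the bookkeeping: one must organise the numerous index contractions so that each quadratic term is genuinely realised as a div-curl product, with the curl-controlled factor supplied by Codazzi/Ricci and a divergence-controlled partner chosen to match.

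For Part (2), I would convert continuity into existence through a vanishing-viscosity, conservation-law formulation specialised to $n=2$, $k=1$. In this regime the normal bundle is trivial, the Ricci equation is vacuous, and the Gauss equation collapses to the Monge--Amp\`ere relation $\det\two = K_\M<0$, so that the Gauss--Codazzi system becomes a $2\times 2$ system of balance laws for the components of $\two$, hyperbolic precisely because the curvature is negative. I would construct approximate solutions $\sol^\e$ by artificial viscosity (or by a Lax--Friedrichs-type scheme), derive uniform $L^p_\loc$ bounds with $p>2$ for the chosen families of metrics by invariant-region or energy estimates, and then invoke Part (1) to extract a subsequence converging weakly to a solution of the Gauss--Codazzi equations. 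The fundamental theorem of surface theory (Ciarlet--Gratie--Mardare \cite{cgm}) finally reconstructs $\Phi$ from this weak $\two$, with the uniform $L^\infty$ control yielding $\Phi\in C^{1,1}$. The hard part of Part (2) is the uniform a priori bound: only for special structures of the metric --- whence ``several families'' rather than all negatively curved surfaces --- do the conservation-law estimates close.
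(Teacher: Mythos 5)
Your proposal is correct, and for Part (2) it follows essentially the same route as the paper (viscous regularisation of the Codazzi equations after eliminating a component of $\two$ via Gauss, uniform $L^\infty_\loc$ bounds by invariant regions, passage to the limit through the weak-continuity framework, and reconstruction of $\Phi$ via the fundamental theorem of surface theory). For Part (1), however, you take a genuinely different --- in fact, the \emph{original} --- route of Chen--Slemrod--Wang \cite{csw}: you work componentwise in local coordinates, read off the curl-type control from the Codazzi and Ricci equations \eqref{codazzi, loc}--\eqref{ricci, loc}, and pair each curl-controlled field with a divergence-controlled partner so as to apply the classical div-curl lemma of Murat--Tartar \cite{m, t1, t2}. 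The proof presented in the paper instead packages the entire system into the single Cartan structural equation $d\Omega + [\Omega\wedge\Omega]=0$ for the $\sonk$-valued connection $1$-form $\Omega$, and applies the wedge-product theorem of Robbin--Rogers--Temple (Lemma~\ref{lemma: RRT}) with $\alpha^\e=\beta^\e=\Omega_n$ and $r=s=2$; the precompactness of $\{d\Omega_n\}$ in $W^{-1,2}_\loc$ follows from the $L^{p/2}_\loc$-bound on the quadratic term ($p>2$), Rellich, and interpolation. What your approach buys is elementarity --- only the scalar div-curl lemma is needed --- at the cost of the index bookkeeping you yourself flag as the main obstacle: one must exhibit, for every quadratic contraction in \eqref{gauss, loc} and \eqref{ricci, loc}, a matching div-curl pair, and this is precisely the nontrivial combinatorial content of \cite{csw}. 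What the paper's formulation buys is that the symmetric differential constraint (on $d$ of both factors) makes all of these pairings collapse into one application of Lemma~\ref{lemma: RRT} to $[\Omega_n\wedge\Omega_n]$, with only the quadratic structure used (Remark~\ref{rem}); this streamlined form is also the one that feeds directly into the Hardy-space refinement (Remark~\ref{rem+}) and the gauge-theoretic arguments of \S\ref{sec: Su-Li}. Both arguments are valid and yield the same threshold $p>2$.
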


Theorem~\ref{thm: csw} (1) establishes the \emph{weak continuity} of Gauss--Codazzi--Ricci equations, which is an important topic for the studues on nonlinear PDEs and calculus of variations; see Evans \cite{evans}. By ``approximate solutions'' we mean $\sol^\e = (\two^\e, \na^{\e,\perp}) \in L^p_\loc$ satisfying the Gauss--Codazzi--Ricci equations with error terms converging to zero in  $W^{-1,p}_\loc$. Then the authors deduced Theorem~\ref{thm: csw} (2) from (1), by way of constructing approximate solutions for $(n,k)=(2,1)$ via vanishing viscosity arguments and establishing uniform estimates in $L^\infty_\loc$ via the method of invariant regions. %(The latter requires strict hyperbolicity of PDE, \emph{i.e.}, negativity of the Gaussian curvature.) 
The metrics obtained in (2) include a one-parameter family of surface metrics containing the standard catenoid; more examples, such as certain (not necessarily small!) perturbations of the helicoid and Enneper surface, are found in \cite{chw, me-ARMA}.

\subsection{Fundamental theorem of submanifold theory; weak stability of isometric immersions}
The above weak continuity result,  Theorem~\ref{thm: csw} (1), is a PDE theorem in nature. More work is needed to translate it into its \emph{geometrical} counterpart, \emph{i.e.}, the stability of $W^{2,p}_\loc$-isometric immersions  with respect to $L^p_\loc$-perturbations of the extrinsic geometries. The key is the aforementioned ``fundamental theorem of submanifold theory'', which allows us to construct isometric immersions from weak solutions to the Gauss--Codazzi--Ricci equations.

Indeed, with the help of the analytical lemmas on Pfaff and Poincar\'{e} systems in \cite{m05, m07} (see \S\ref{sec: Su-Li} for details), the following version of the fundamental theorem of submanifold theory (\emph{a.k.a.} the ``realisation theorem'' \cite{cl}) has been established for $W^{2,p}$-isometric immersions; $p>n$.

\begin{theorem}[S. Mardare \cite{m05, m07}; Szopos \cite{sz}]\label{thm: fundamental, Mardare-Szopos}
Let $(\M^n,g)$ be a simply-connected closed Riemannian manifold; $n \geq 2$. Suppose that $\mathfrak{S} \in L^{p}$ with $p>n$ is a weak solution to the Gauss--Codazzi--Ricci equations on $\M$ with arbitrary codimension $k$. There exists an isometric immersion $\Phi: (\M,g) \emb (\R^{n+k},\delta)$ of $W^{2,p}$-regularity whose extrinsic geometry coincides with $\mathfrak{S}$.  Moreover, $\Phi$ is unique modulo Euclidean rigid motions in $\R^{n+k}$ outside null sets. 
\end{theorem}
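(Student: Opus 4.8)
The plan is to recast the existence of $\Phi$ as the integrability of a linear first-order system for a moving frame, whose compatibility conditions are exactly the Gauss--Codazzi--Ricci equations. Fix a local $g$-orthonormal frame $\{e_1,\dots,e_n\}$ of $T\M$ with dual coframe $\{\omega^1,\dots,\omega^n\}$, together with a $g^E$-orthonormal frame $\{e_{n+1},\dots,e_{n+k}\}$ of the putative normal bundle $E$. From the data $(g,\sol)$ I would assemble a single $\sonk$-valued connection one-form $\omega=(\omega^b_a)$: its tangential block $\omega^j_i$ is the Levi--Civita connection of $g$, its mixed block $\omega^\alpha_i$ encodes the second fundamental form $\two$ (via the $h^\alpha_{ij}$), and its normal block $\omega^\beta_\alpha$ encodes the normal connection $\nap$ (via the $\kappa^\alpha_{i\beta}$). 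Since each ingredient is built algebraically from $\sol\in L^p$ and the $W^{1,p}\cap L^\infty$-metric, one has $\omega\in L^p$. The sought immersion and adapted frame $R=(e_1,\dots,e_{n+k})\colon\M\to SO(n+k)$, viewed as a matrix, should then satisfy the Pfaff system
\begin{equation*}
\dd R = R\,\omega, \qquad \dd\Phi = \omega^i\, e_i .
\end{equation*}

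The key algebraic step is to verify that the zero-curvature condition
\begin{equation*}
\dd\omega + \omega\wedge\omega = 0
\end{equation*}
is equivalent, block by block, to the local Gauss--Codazzi--Ricci Equations~\eqref{gauss, loc}--\eqref{ricci, loc}: the tangential--tangential component reproduces the Gauss equation, the tangential--normal component the Codazzi equation, and the normal--normal component the Ricci equation. This is a direct computation with Cartan's structure equations, and by hypothesis it holds in the distributional sense precisely because $\sol$ is a \emph{weak} solution. One also checks, via Cartan's first structure equation and the symmetry $h^\alpha_{ij}=h^\alpha_{ji}$, that the vector-valued one-form $\omega^i e_i$ to be integrated for $\Phi$ is closed once $R$ is known.

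With flatness in hand I would invoke the analytical lemmas on Pfaff systems from \cite{m05, m07}: on a simply-connected manifold, a connection one-form $\omega\in L^p$ with $p>n$ satisfying $\dd\omega+\omega\wedge\omega=0$ distributionally admits a solution $R\in W^{1,p}$ of $\dd R=R\,\omega$, unique once an initial value $R(x_0)\in SO(n+k)$ is prescribed; since $\omega$ is $\sonk$-valued the orthogonality constraint $R^{\mathsf T}R=\id$ propagates, so $R$ is genuinely $SO(n+k)$-valued. The threshold $p>n$ is essential: it yields the Morrey embedding $W^{1,p}\emb C^0\cap L^\infty$ and the Banach-algebra property, which make the pointwise frame meaningful and the quadratic terms $\omega\wedge\omega$ well-defined in $W^{-1,p}$. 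Feeding $R$ back into the second equation produces a closed $L^p$ one-form $\omega^i e_i$; the Poincar\'e-system lemma then integrates it on the simply-connected $\M$ to a primitive $\Phi\in W^{2,p}$, unique up to an additive constant. Because the $e_i$ are orthonormal columns, $\dd\Phi=\omega^i e_i$ forces $\Phi^\#\delta=g$ and the injectivity of $\dd\Phi$, so $\Phi$ is the desired isometric immersion, and its extrinsic geometry coincides with $\sol$ by construction.

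Uniqueness modulo rigid motions follows from the ODE-type uniqueness of the two systems: modifying the initial frame $R(x_0)$ by a fixed element of $SO(n+k)$ and the base point $\Phi(x_0)$ by a translation exhausts all solutions, so $\Phi$ is determined up to an element of the Euclidean group, with equality holding outside a null set owing to the merely $W^{2,p}$ regularity. I expect the main obstacle to be exactly the low-regularity integration of the Pfaff and Poincar\'e systems: in the smooth category this is the classical Frobenius theorem, but at $L^p$ regularity the curvature is only a distribution, and establishing solvability together with the correct function-space bookkeeping --- that $\omega\in L^p$ yields $R\in W^{1,p}$ and hence $\Phi\in W^{2,p}$ --- is precisely the content of Mardare's lemmas, which must be applied with care rather than taken for granted.
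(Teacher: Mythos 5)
Your proposal follows essentially the same route the paper describes for this theorem: encode $(g,\sol)$ as an $\sonk$-valued connection $1$-form whose flatness $d\Omega+[\Omega\wedge\Omega]=0$ is block-by-block equivalent to the Gauss--Codazzi--Ricci equations, solve the Pfaff system for the adapted frame via S.~Mardare's $L^p$-lemmas (where $p>n$ supplies the Morrey embedding), and then integrate the resulting closed Poincar\'{e} system on the simply-connected domain to obtain $\Phi\in W^{2,p}$, with uniqueness up to rigid motions coming from the uniqueness of both systems. The bookkeeping (antisymmetry propagating orthogonality of the frame, closedness of $\omega^i e_i$ via the first structure equation and the symmetry of $\two$) is correct and matches the Tenenblat--Mardare--Szopos argument that the paper cites.
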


From here, the weak stability of $W^{2,p}_\loc$-isometric immersions can be readily deduced:

\begin{theorem}\label{thm: weak stability of isom imm, W2p}
Let $\left\{\Phi^\e\right\}_{\e>0}$ be  uniformly bounded $W^{2,p}_{\loc}$-immersions of an $n$-dimensional domain $\M \subset \R^n$ into $\R^{n+k}$, where $p> n \geq 2$. Assume that for any compact subset $\mathcal{K} \Subset \M$, there are constants $0<c \leq C<\infty$ (depending on $\mathcal{K}$) such that for almost every $x \in \mathcal{K}$, one has
\begin{align*}
c \leq \Big\{\text{eigenvalues of the matrix } g^\e(x):=d\Phi^\e(x)\otimes d\Phi^\e(x)\Big\} \leq C.
\end{align*} 
Then $\{\Phi^\e\}$ converges weakly in $W^{2,p}_{\loc}$ modulo subsequences to an immersion $\overline{\Phi}$, whose induced metric $\overline{\Phi}^\#\delta$ is a 
limiting point of $g^\e$ in $W^{1,p}_{\loc}$ and whose extrinsic geometry (\emph{i.e.}, second fundamental form and normal connection) is a limiting point of that of $\Phi^\e$ in $L^{p}_{\loc}$. 
\end{theorem}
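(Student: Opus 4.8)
The plan is to argue purely by compactness, extracting successively finer subsequences. Since $1<p<\infty$, the space $W^{2,p}$ is reflexive, so the uniform bound on $\{\Phi^\e\}$ together with a diagonal argument over an exhaustion of $\M$ by compact subsets yields a subsequence (not relabelled) with $\Phi^\e \weak \overline\Phi$ weakly in $W^{2,p}_\loc$. Because $p>n$, Morrey's inequality gives the embedding $W^{2,p} \emb C^{1,1-n/p}$ on each $\mathcal{K} \Subset \M$, and this embedding is compact into $C^1$; hence, along a further subsequence, $\Phi^\e \to \overline\Phi$ strongly in $C^1_\loc$, and in particular $d\Phi^\e \to d\overline\Phi$ uniformly on compact sets.

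Next I would invoke the uniform ellipticity to secure the immersion property of the limit and to pass to the limit in the induced metric. Since $g^\e = d\Phi^\e \otimes d\Phi^\e \to \overline g := d\overline\Phi\otimes d\overline\Phi$ uniformly on compacts and the smallest eigenvalue of $g^\e$ stays $\geq c>0$, the eigenvalues being continuous in the matrix entries force $\overline g \geq c > 0$; thus $d\overline\Phi$ has full rank $n$, so $\overline\Phi$ is an immersion with $\overline\Phi^\#\delta = \overline g$. For the metric convergence I would expand $\p_k g^\e_{ij} = \p_k\p_i\Phi^\e \cdot \p_j\Phi^\e + \p_i\Phi^\e \cdot \p_k\p_j\Phi^\e$; here $\p^2\Phi^\e \weak \p^2\overline\Phi$ in $L^p_\loc$ while $\p\Phi^\e \to \p\overline\Phi$ in $L^\infty_\loc$, and the elementary weak-times-strong product lemma (if $f^\e\weak f$ in $L^p$ and $\phi^\e \to \phi$ in $L^\infty$, then $f^\e\phi^\e \weak f\phi$ in $L^p$) gives $\p_k g^\e_{ij} \weak \p_k \overline g_{ij}$ in $L^p_\loc$. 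Combined with the uniform convergence of $g^\e$ itself, this identifies $\overline g$ as a limit point of $\{g^\e\}$ in $W^{1,p}_\loc$.

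The crux is the extrinsic geometry. I would fix, for each $\e$, the orthonormal normal frame $\{e^\e_\alpha\}$ produced by Gram--Schmidt orthonormalisation from $\{\p_1\Phi^\e,\ldots,\p_n\Phi^\e\}$ completed by a fixed ambient basis; the uniform ellipticity keeps the tangent vectors uniformly linearly independent, so the procedure is stable and yields frames with $e^\e_\alpha \to \overline e_\alpha$ strongly in $C^0_\loc$ and $\p_i e^\e_\alpha \weak \p_i\overline e_\alpha$ weakly in $L^p_\loc$. Writing the extrinsic geometry through the orthogonal projection $P^\e$ onto the normal space, the scalar components read $h^{\e,\alpha}_{ij} = \langle \p_i\p_j\Phi^\e, e^\e_\alpha\rangle$ and $\kappa^{\e,\alpha}_{i\beta} = \langle \p_i e^\e_\beta, e^\e_\alpha\rangle$. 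Each is a product of a weakly $L^p_\loc$-convergent factor (a second derivative of $\Phi^\e$, respectively a derivative of the frame) with a strongly $C^0_\loc$-convergent one (the frame), so the weak-times-strong lemma again applies and identifies the weak $L^p_\loc$-limit $\overline\sol$ of $\sol^\e = (\two^\e,\nape)$ with the extrinsic geometry of the limiting immersion $\overline\Phi$. That $\overline\sol$ indeed solves the Gauss--Codazzi--Ricci equations in the weak sense is then guaranteed by Theorem~\ref{thm: csw}(1), as $\{\sol^\e\}$ is a sequence of exact solutions bounded in $L^p_\loc \subset L^2_\loc$.

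I expect the main obstacle to be this last step. The extrinsic geometric quantities couple the second derivatives of $\Phi^\e$, which converge only weakly, with normal frames that themselves depend on $\e$; a naive passage to the limit would confront a product of two merely weakly convergent sequences, for which no continuity holds. The argument closes only because the uniform ellipticity renders the Gram--Schmidt construction nondegenerate, upgrading the convergence of the frames and projections from weak to strong and thereby licensing the weak-times-strong lemma. Without ellipticity, $d\overline\Phi$ could lose rank on a set of positive measure, so that both the immersion property and the very definition of the normal bundle --- and hence of $\overline\sol$ --- would break down in the limit.
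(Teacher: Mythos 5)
Your argument is essentially correct, but it follows a genuinely different route from the one the paper intends. The paper deduces Theorem~\ref{thm: weak stability of isom imm, W2p} \emph{from} Theorem~\ref{thm: fundamental, Mardare-Szopos}: one extracts the extrinsic geometries $\sol^\e$ of $\Phi^\e$ as exact $L^p_\loc$-solutions of the Gauss--Codazzi--Ricci equations, passes to the weak limit using the compensated-compactness Theorem~\ref{thm: csw}~(1) to see that $\overline{\sol}$ is again a weak solution, and then invokes the realisation theorem (existence plus uniqueness modulo rigid motions) to produce the limiting immersion and identify it with the weak $W^{2,p}_\loc$-limit of $\Phi^\e$. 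You instead never leave the level of the immersions: you use the compact Morrey embedding $W^{2,p}\emb C^{1,1-n/p}$ (this is precisely where $p>n$ enters) to upgrade $d\Phi^\e$ to strong $C^0_\loc$-convergence, so that every product appearing in $g^\e$, $\two^\e$, and $\nape$ is of weak-times-strong type; no div-curl structure is ever needed, and neither is simple-connectedness of $\M$ (which the realisation theorem requires but the statement of the theorem does not assume). What each approach buys: yours is more elementary and self-contained, and directly identifies the extrinsic geometry of $\overline{\Phi}$ without a reconstruction step; the paper's route separates the PDE content (weak continuity) from the geometric content (realisation), which is the conceptual thrust of the survey and, crucially, is the only one that survives in the lower-regularity regime of Theorem~\ref{thm: Li--Su weak stability}, where $d\Phi^\e$ is merely bounded in $L^\infty_\loc$ and no compact embedding into $C^0$ is available, so genuine compensated compactness becomes unavoidable. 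Two small points to tidy in your write-up: the Gram--Schmidt completion by a fixed ambient basis is only locally admissible (the chosen constant vectors must stay transversal to $d\Phi^\e(T\M)$), so the frame --- and hence the components $\kappa^{\e,\alpha}_{i\beta}$ --- should be constructed chart by chart, with the frame-independence of $\two$ and of the normal connection as a connection noted; and the final appeal to Theorem~\ref{thm: csw}~(1) is not needed for the conclusion as stated, since $\overline{\sol}$ is by construction the extrinsic geometry of the actual immersion $\overline{\Phi}$.
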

Variants of Theorems~\ref{thm: fundamental, Mardare-Szopos} and \ref{thm: weak stability of isom imm, W2p} have been proved in \cite{m07, sz, ciarlet', new1} by S. Mardare, Szopos, Ciarlet--Larsonneur, and Ciarlet--C. Mardare, and via geometrical arguments in \cite{cl, cl2, new2} by Chen, Slemrod, and the author. %The key ideas and ingredients in these two groups of works are not fundamentally different.

It is crucial to note the following discrepancy between the regularity assumptions for the geometrical \emph{vs.} analytical results on weak continuity/stability:
\begin{remark}\label{remark: key for W2p}
Theorems~\ref{thm: fundamental, Mardare-Szopos} and \ref{thm: weak stability of isom imm, W2p} on the fundamental theorem of submanifold theory and the weak stability of $W^{2,p}_\loc$-isometric immersions require $p>n$, while the weak continuity Theorem~\ref{thm: csw} (1) for Gauss--Codazzi--Ricci equations requires $p>2$. 

Also, S. Mardare \cite{m05} suggested that Theorem~\ref{thm: fundamental, Mardare-Szopos} may be sharp; \emph{i.e.}, the fundamental theorem of submanifold theory might be unachievable for $W^{2,p}_\loc$-isometric immersions with $p \leq n$.
\end{remark}

As an attempt to bridge such discrepancy, X. Su and the author  \cite{ls} succeeded in weakening the assumption $\{\Phi^\e\} \subset W^{2,p}_\loc$ for $p>n$ in Theorem~\ref{thm: fundamental, Mardare-Szopos} to $\{\Phi^\e\} \subset \mathcal{X}$ ($n \geq 3$). Here $\X$ is a function space satisfying %or some function space $\X$ satisfying
\begin{align*}
    W^{2,n}_\loc \subsetneq \mathcal{X} \subsetneq W^{2,2}_\loc.
\end{align*}
More precisely, $\mathcal{X} = L^{q,n-q}_{2,w,\loc}$ with $q \in ]2,n]$; \emph{i.e.}, the second derivatives $\p_{ij}f$ of any $f \in \mathcal{X}$ lie in the local weak Morrey space $L^{q,n-q}_{w,\loc}$, which interpolates between $L^n_{w,\loc}$ (the local weak $L^n$-space) and $L^2_\loc$. Such spaces first appeared in Morrey's seminal work \cite{morrey} on regularity of harmonic mappings from 2-dimensional domains into manifolds. For $1 \leq q<\infty$ and $0 \leq \lambda \leq n$, the weak Morrey space $L^{q, \lambda}_w(U)$ consists of functions $f \in L^q_w(U)$ satisfying 
$$
\|f\|_{L^{q, \lambda}(U)} := \sup _{x \in U,\, 0<r\le \operatorname{diam}(U)} r^{-\lambda / q}\|f\|_{L^q_w\left(B_r(x) \cap U\right)}<\infty.
$$
We also define inductively $f \in L_{\ell,w}^{p,\lambda}(U)$ by $\na f \in L_{\ell-1,w}^{p,\lambda}(U)$ for each $\ell = 1,2,3,\ldots$, and define the local version of function spaces as usual.

The main result in \cite{ls} (Theorem~1.3 therein) establishes the following version of the fundamental theorem of submanifold theory, which extends Theorem~\ref{thm: fundamental, Mardare-Szopos} above. 

\begin{theorem}\label{thm: fundamental}
Let $(\M^n,g)$ be a simply-connected closed Riemannian manifold; $n \geq 3$. Suppose that $\mathfrak{S} \in L^{q,n-q}_w$ with $2<q\leq n$ is a weak solution to the Gauss--Codazzi--Ricci equations on $\M$ with arbitrary codimension $k$. There exists an isometric immersion $\Phi: (\M,g) \emb (\R^{n+k},\delta)$ in the regularity class $\X = L^{q,n-q}_{2,w}$ whose extrinsic geometry coincides with $\mathfrak{S}$.  Moreover, $\iota$ is unique modulo Euclidean rigid motions in $\R^{n+k}$ outside null sets. 
\end{theorem}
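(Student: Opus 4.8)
The plan is to realise $\Phi$ by the classical moving-frame construction, following the scheme behind Theorem~\ref{thm: fundamental, Mardare-Szopos} of S.~Mardare and Szopos (and the Pfaff/Poincar\'e lemmas of \cite{m05, m07}) but executed entirely within the borderline weak-Morrey scale: first integrate a flat connection (a \emph{Pfaff system}) to produce an adapted orthonormal frame, then integrate the associated soldering form (a \emph{Poincar\'e system}) to produce the immersion. For the first step, from the Levi-Civita connection of $g$, the second fundamental form $\two$, and the normal connection $\nap$ I would assemble a single $\sonk$-valued connection $1$-form $\Omega$ on $\M$, whose diagonal blocks are the tangential Christoffel connection and the normal connection built from $\kappa^\alpha_{i\beta}$, and whose off-diagonal blocks are built from $h^\alpha_{ij}$. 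The content of the local Equations~\eqref{gauss, loc}--\eqref{ricci, loc} is exactly that $\Omega$ is flat, $\dd\Omega + \Omega\wedge\Omega = 0$, understood distributionally. Since $\mathfrak{S}\in L^{q,n-q}_{w}$ and the Christoffel symbols are controlled by $g$, we have $\Omega\in L^{q,n-q}_{w}$, and a preliminary point is that the quadratic term $\Omega\wedge\Omega$ be a well-defined distribution --- this rests on a product estimate in the weak Morrey space at the critical exponent $n-q$.

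\emph{The Pfaff system.} I would solve $\dd F = \Omega F$ (in a suitable sign convention) for a frame $F:\M\to \mathrm{SO}(n+k)$. Since $\M$ is simply connected, flatness is exactly the integrability condition, so the cohomological obstruction vanishes and the only issue is solvability and regularity in the critical class: I would show that there is a unique solution $F\in L^{q,n-q}_{1,w}$, normalised by its value at one point. The regularity bookkeeping is delicate because $L^{q,n-q}_{1,w}$ lies precisely at the threshold of continuity --- the Morrey embedding exponent degenerates, $\alpha = 1-(n-(n-q))/q = 0$ --- so $F$ is of vanishing-oscillation type rather than H\"older, and $\dd F = \Omega F$ must be closed through borderline Morrey multiplication estimates rather than pointwise bounds. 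Concretely, I would fix a Coulomb--Uhlenbeck gauge in which $\Omega$ is divergence-free, derive the uniform $L^{q,n-q}_{1,w}$ a~priori estimate from the resulting critical elliptic system, and pass to the limit along a regularisation of $\mathfrak{S}$, invoking the compensated-compactness (div-curl) structure of the flatness relation --- Theorem~\ref{thm: csw}~(1) --- to control the weak limits of the quadratic terms $\Omega\wedge\Omega$ and $\Omega F$.

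\emph{The Poincar\'e system and verification.} With the adapted frame $F=(F_1,\dots,F_{n+k})$ in hand, I would form the $\rnk$-valued $1$-form $\theta := \sum_{i=1}^{n} F_i\,\theta^i$, where $\{\theta^i\}$ is the $g$-orthonormal coframe. The first structure equation for $g$ combined with $\dd F=\Omega F$ yields $\dd\theta=0$ distributionally, so the Poincar\'e lemma on the simply-connected $\M$ furnishes $\Phi$ with $\dd\Phi=\theta$, unique up to an additive constant in $\rnk$. Orthonormality of the $F_i$ gives $\Phi^{\#}\delta = \sum_i \theta^i\otimes\theta^i = g$, so $\Phi$ is an isometric immersion; since $\theta\in L^{q,n-q}_{1,w}$ we get $\p_{ij}\Phi\in L^{q,n-q}_{w}$, i.e.\ $\Phi\in\X=L^{q,n-q}_{2,w}$, and a direct computation recovers $\two$ and $\nap$ from $\Phi$, matching $\mathfrak{S}$. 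Uniqueness modulo Euclidean rigid motions then follows from the uniqueness of $F$ modulo a constant element of $\mathrm{SO}(n+k)$ and of the primitive modulo translations.

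\emph{Main obstacle.} The genuinely new difficulty, compared with the $p>n$ regime of Theorem~\ref{thm: fundamental, Mardare-Szopos}, lies entirely in the Pfaff step. When $2<q\le n$ the data sit below the Sobolev embedding threshold, so the frame $F$ is no longer continuous and the path-integral and fixed-point arguments underlying Mardare's Pfaff lemma do not transfer verbatim. Everything hinges on the uniform $L^{q,n-q}_{1,w}$ a~priori estimate for the approximating frames at the critical exponent $n-q$, where the interpolation between $L^{n}_{w}$ and $L^{2}$ is sharp and the gauge-fixing and compensated-compactness inputs must be combined just so. I expect this borderline estimate, rather than the essentially formal reformulation and Poincar\'e steps, to be where the main work lies.
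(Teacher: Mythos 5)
Your overall architecture --- the Cartan formalism with an $\sonk$-valued connection $1$-form $\Omega$, a Coulomb--Uhlenbeck gauge, the Pfaff-then-Poincar\'{e} scheme, and compensated compactness to handle the quadratic terms --- matches the paper's, and you correctly locate the difficulty in the Pfaff step. But the proposal is missing the one idea that actually closes that step, and the substitute you offer does not work as stated. You propose to ``pass to the limit along a regularisation of $\mathfrak{S}$'' with uniform $L^{q,n-q}_{1,w}$ bounds on approximating frames; the structural problem is that a mollified connection $\Omega^\e$ is no longer flat, so the approximate Pfaff systems $dF^\e=\Omega^\e F^\e$ are not integrable and there is no family of frames $F^\e$ on which to run your a~priori estimate. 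Moreover, even granting such a family, you never specify the device that closes the estimate at the critical exponent --- ``borderline Morrey multiplication'' is named but not supplied, and ``flatness is exactly the integrability condition'' is precisely the assertion that requires proof at this regularity.

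What the paper proves instead is a \emph{rigidity} statement: the Coulomb--Uhlenbeck gauge $P$ of Lemma~\ref{lem: riviere} does not merely make $\Omega$ divergence-free, it trivialises it outright, $\Xi:=P^\#\Omega=0$ on a small ball $\ball$, which \emph{is} the Pfaff equation $dP+\Omega P=0$; no separate solution of $dF=\Omega F$ is needed. The mechanism is an absorption argument: coexactness gives $\Xi=\star d\xi$ with $\xi=0$ on $\p\ball$; flatness gives $d\star d\xi+[\star d\xi\wedge\star d\xi]=0$; pairing with $\xi$ and integrating by parts yields $\|d\xi\|_{L^2(\ball)}^2=-\int_\ball\xi\wedge\star d\xi\wedge\star d\xi$, which by Hardy--BMO duality is at most $C\,[\xi]_{{\rm BMO}(\ball)}\,\|[\star d\xi\wedge\star d\xi]\|_{\mathcal{H}^1}$; the div-curl/CLMS estimate (Remark~\ref{rem+}) bounds the Hardy norm by $C\|d\xi\|_{L^2(\ball)}^2$, while $[\xi]_{{\rm BMO}(\ball)}\le C\|d\xi\|_{L^{2,n-2}(\ball)}\le C\|\Omega\|_{L^{r,n-r}(\ball)}$ is made small by shrinking $\ball$. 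The resulting inequality $\|d\xi\|_{L^2}^2\le C\e\|d\xi\|_{L^2}^2$ forces $d\xi=0$, hence $\Xi=0$. This is where the antisymmetric (Lie-algebra-valued) structure and the criticality of $L^{2,n-2}$ enter, and it is the step your proposal would need to supply; the Poincar\'{e} step and the verification of $\Phi^\#\delta=g$ you describe are then routine, as you say.
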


\begin{remark}\label{rem: litzinger}
The fundamental theorem of surface theory (\emph{i.e.}, the case $(n,k)=(2,1)$), together with the corresponding weak stability result, has been proved for $W^{2,2}$-isometric immersions of surfaces into $\R^3$ by Litzinger \cite{litzinger}. Our proof of Theorem~\ref{thm: fundamental} is largely motivated by this work.
\end{remark}

We also deduced the weak stability result \cite[Theorem~5.1]{ls} for isometric immersions as follows, which is a natural generalisation of Theorem~\ref{thm: weak stability of isom imm, W2p}.

\begin{theorem}\label{thm: Li--Su weak stability}
Let $\left\{\Phi^\e\right\}_{\e>0}$ be uniformly bounded $L^{q,n-q}_{2,w,\loc}$-immersions of an $n$-dimensional domain $\M \subset \R^n$ into $\R^{n+k}$, where $2<q\leq n$. Assume the following conditions:
\begin{enumerate}
\item
for any compact subset $\mathcal{K} \Subset \M$, there are constants $0<c \leq C<\infty$ (depending on $\mathcal{K}$) such that for almost every $x \in \mathcal{K}$, one has
\begin{align}\label{imm, new}
c \leq \Big\{\text{eigenvalues of the matrix } g^\e(x):=d\Phi^\e(x)\otimes d\Phi^\e(x)\Big\} \leq C;
\end{align}
\item
for each $j \in \{1,2,\,\ldots,n\}$ and $\e>0$, the first derivative $\p_j \Phi^\e$ lies in $L^{q,n-q}_{1,w,\loc} \cap L^\infty_\loc$. 
\end{enumerate}
 
Then, modulo subsequences,  $\{\Phi^\e\}$ converges  weakly in $L^{q,n-q}_{2,w,\loc}$ to an immersion $\overline{\Phi}$, whose induced metric $\overline{\Phi}^\#\delta$ is a 
limiting point of $g^\e$ in $L^{q,n-q}_{1,w,\loc}$, and whose extrinsic geometry (\emph{i.e.}, second fundamental form and normal connection) is a 
limiting point of that of $\Phi^\e$ in $L^{q,n-q}_{w,\loc}$. 
\end{theorem}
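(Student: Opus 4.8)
The plan is to mirror the deduction of Theorem~\ref{thm: weak stability of isom imm, W2p} from the fundamental theorem, but now carried out entirely within the weak Morrey scale. First I would extract limits. Since $\{\Phi^\e\}$ is uniformly bounded in $L^{q,n-q}_{2,w,\loc}$, and the weak $L^q$-spaces arise as Lorentz duals $L^q_w=(L^{q',1})^*$, bounded sequences admit weak-$*$ sequentially convergent subsequences; passing to such a subsequence gives $\Phi^\e \weak \overline{\Phi}$ in $L^{q,n-q}_{2,w,\loc}$, which is the asserted mode of convergence and in particular produces weak limits of the second derivatives $\p_{ij}\Phi^\e$ in $L^{q,n-q}_{w,\loc}$. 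The crucial upgrade is to show that the \emph{first} derivatives converge \emph{strongly}: by hypothesis~(2) the family $\{\p_j\Phi^\e\}$ is bounded in $L^{q,n-q}_{1,w,\loc}\cap L^\infty_\loc$, so a Rellich--Kondrachov-type compact embedding in the weak Morrey scale, combined with the uniform $L^\infty_\loc$ bound and interpolation, should yield $d\Phi^\e \to d\overline{\Phi}$ strongly in $L^{q,n-q}_{w,\loc}$ (and in every $L^r_\loc$, $r<\infty$).

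With strong convergence of the gradients in hand, the remaining assertions follow from the systematic use of the principle \emph{weak $\times$ strong $=$ weak}. The eigenvalue pinching~\eqref{imm, new} passes to the a.e. limit, so $\overline{g}:=d\overline{\Phi}\otimes d\overline{\Phi}$ has eigenvalues in $[c,C]$ almost everywhere; in particular $\overline{\Phi}$ is an immersion and $\overline{g}$ is nondegenerate. For the induced metrics, $g^\e = d\Phi^\e\otimes d\Phi^\e \to \overline{g}$ strongly in $L^{q,n-q}_{w,\loc}$, while $\p_k g^\e_{ij} = \bra\p_{ki}\Phi^\e,\p_j\Phi^\e\ket + \bra\p_i\Phi^\e,\p_{kj}\Phi^\e\ket$ is a product of a weakly convergent second derivative and a strongly convergent first derivative, hence converges weakly in $L^{q,n-q}_{w,\loc}$. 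Thus $g^\e \weak \overline{\Phi}^\#\delta$ in $L^{q,n-q}_{1,w,\loc}$, which is the metric statement.

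Next I would recover the extrinsic geometry. Applying Gram--Schmidt to the columns of $d\Phi^\e$ produces a normal frame $\{n^\e_\alpha\}$ depending smoothly on $d\Phi^\e$; the lower eigenvalue bound $c>0$ keeps this construction uniformly nondegenerate, so $n^\e_\alpha \to \overline{n}_\alpha$ strongly (in $L^r_\loc$ for all finite $r$) as $d\Phi^\e\to d\overline{\Phi}$. Then $(h^\e)^\alpha_{ij}=\bra\p_{ij}\Phi^\e, n^\e_\alpha\ket$ and $(\kappa^\e)^\alpha_{i\beta}=\bra\p_i n^\e_\beta, n^\e_\alpha\ket$ are once more of the form (weak second derivative)$\times$(strong frame), so they converge weakly in $L^{q,n-q}_{w,\loc}$ to $\bra\p_{ij}\overline{\Phi},\overline{n}_\alpha\ket$ and $\bra\p_i\overline{n}_\beta,\overline{n}_\alpha\ket$, which are exactly the components of the second fundamental form $\overline{\two}$ and normal connection $\overline{\nap}$ of $\overline{\Phi}$. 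This identifies the extrinsic geometry of $\overline{\Phi}$ with the weak $L^{q,n-q}_{w,\loc}$-limit of that of $\Phi^\e$, completing the three assertions.

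The hard part is the weak Morrey functional analysis underpinning the two extraction steps: the weak-$*$ sequential compactness and, above all, the compact embedding $L^{q,n-q}_{1,w,\loc}\Subset L^{q,n-q}_{w,\loc}$ that promotes weak to strong convergence of $d\Phi^\e$, together with a careful verification that the weak $\times$ strong product rule survives in these non-reflexive, non-separable Lorentz--Morrey spaces (where the $L^\infty_\loc$ control from hypothesis~(2) is indispensable). I would also highlight the conceptual role of weak continuity: to know that the limiting data $\overline{\sol}=(\overline{\two},\overline{\nap})$ is a \emph{bona fide} weak solution of the Gauss--Codazzi--Ricci equations on $(\M,\overline{g})$ --- so that $\overline{\Phi}$ is consistent with Theorem~\ref{thm: fundamental} and, through its uniqueness clause, is pinned down modulo rigid motions --- one invokes a div-curl (compensated compactness) lemma adapted to the weak Morrey scale to pass the quadratic Gauss nonlinearity $g_{\alpha\beta}\big(h^\alpha_{ji}h^\beta_{k\ell}-h^\alpha_{ki}h^\beta_{j\ell}\big)$ of \eqref{gauss, loc} to the limit. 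This is precisely the point at which the lower regularity threshold $q>2$ (rather than $q>n$) must be exploited, exactly as flagged in Remark~\ref{remark: key for W2p}.
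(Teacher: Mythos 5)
Your overall strategy --- weak subsequential limits of the second derivatives, strong convergence of the first derivatives, the weak-times-strong principle for the quadratic quantities $g^\e$, $h^\e$, $\kappa^\e$, and the final identification through the weak continuity of the Gauss--Codazzi--Ricci system and Theorem~\ref{thm: fundamental} --- is exactly the deduction the paper intends (Theorem~\ref{thm: Li--Su weak stability} is presented as the Morrey-scale analogue of Theorem~\ref{thm: weak stability of isom imm, W2p}, obtained from the fundamental theorem plus the compensated-compactness machinery of \S\ref{sec: weak continuity}). The one genuine misstep is the claimed ``Rellich--Kondrachov-type compact embedding in the weak Morrey scale'' yielding $d\Phi^\e\to d\overline{\Phi}$ \emph{strongly in} $L^{q,n-q}_{w,\loc}$: the embedding $L^{q,n-q}_{1,w,\loc}\emb L^{q,n-q}_{w,\loc}$ is \emph{not} compact. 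The Morrey norm is a supremum over balls of every radius and is not continuous under translation; a bounded sequence of increasingly concentrated bumps converges weakly (indeed strongly in every $L^r_\loc$, $r<\infty$) to zero while its $L^{q,n-q}_w$-norm stays bounded below. Fortunately this strong Morrey convergence is never needed. Since $L^{q,n-q}_{w,\loc}\emb L^{2,n-2}_\loc\emb L^2_\loc$ for $q>2$, hypothesis~(2) places $\{\p_j\Phi^\e\}$ in a bounded subset of $W^{1,2}_\loc\cap L^\infty_\loc$; the classical Rellich lemma plus interpolation with the $L^\infty_\loc$ bound gives $d\Phi^\e\to d\overline{\Phi}$ strongly in $L^r_\loc$ for every finite $r$, which is all the weak-times-strong argument requires to identify the distributional limits of $g^\e$, $\p g^\e$, $h^\e$, $\kappa^\e$. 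The uniform $L^{q,n-q}_{w,\loc}$-bounds on these quantities (Cauchy--Schwarz on products of factors controlled in $L^{q,n-q}_{1,w,\loc}\cap L^\infty_\loc$ and $L^{q,n-q}_{w,\loc}$) together with lower semicontinuity then upgrade the distributional limits to the ``limiting points'' in the weak Morrey topologies asserted in the statement; this is also the safer route for the initial extraction step, rather than leaning on weak-$*$ sequential compactness in these non-separable spaces.

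Two smaller points. Gram--Schmidt applied to the columns of $d\Phi^\e$ orthonormalises the \emph{tangent} frame; to obtain the normal frame $\{n^\e_\alpha\}$ you must first complete $\{\p_1\Phi^\e,\ldots,\p_n\Phi^\e\}$ by $k$ further vectors (a fixed choice works on a subdomain where transversality is uniform, which \eqref{imm, new} and the $L^\infty_\loc$ bound guarantee locally) and then orthonormalise; the construction, like the conclusion, is local. Finally, your closing paragraph is the right framing and matches the paper's emphasis: the $q>2$ threshold is precisely what allows the div-curl/wedge-product argument of \S\ref{sec: weak continuity} to pass the quadratic Gauss nonlinearity to the limit, so that $\big(\overline{\two},\overline{\nap}\big)$ solves the Gauss--Codazzi--Ricci system for $\overline{g}$ and Theorem~\ref{thm: fundamental}, via its uniqueness clause, pins down $\overline{\Phi}$ modulo rigid motions.
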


The nondegeneracy condition~\eqref{imm, new} of the induced metrics $g^\e$ is indispensable: counterexamples for $W^{2,n}$-isometrically immersed hypersurfaces in $\R^{n+1}$ lacking the weak stability property have been constructed, by Langer \cite{langer} for $n=2$ and  by the author \cite{me-PAMS} for $n \geq 3$.

The key to the proof of Theorem~\ref{thm: fundamental} is to recast the Gauss--Codazzi--Ricci equations into the \emph{Cartan formalism}, and then establish the existence of local Coulomb--Uhlenbeck gauges in the corresponding weak regularity regime. The local flatness of the gauge transformed connection 1-forms, proved using both the compensated compactness arguments in \cite{csw} and the Hardy-BMO duality, yields the desired isometric immersion. Loosely speaking, putting into the Coulomb--Uhlenbeck gauge amounts to choosing a good co-ordinate frame in a global manner,\footnote{More precisely we mean a ``frame'' (a section of the frame bundle) instead of a ``co-ordinate system'' here. But we intentionally blur such distinctions, for co-ordinates are probably more natural for the PDE-oriented reader.} in which the Gauss--Codazzi--Ricci equations possess better analytical properties.

The Morrey space $L^{2,n-2}_\loc$ (noting that 
$L^{q,n-q}_{w,\loc}$ embeds continuously into it for $2 < q \leq n$) is \emph{critical} regardless of the codimension $k$ for gauge-theoretic considerations. This phenomenon is closely related to the partial regularity theory for harmonic mappings in arbitrary dimensions and codimensions. See Rivi\`{e}re--Struwe \cite{rs}. 

\begin{comment}
\subsection{Weak continuity of Gauss--Codazzi--Ricci equations revisited}
By way of combining the gauge-theoretic  analysis on the Gauss--Codazzi--Ricci equations and the compensated compactness arguments in Chen--Slemrod--Wang \cite{csw}, we observe that the requirement of $p>2$ in the weak continuity Theorem~\ref{thm: csw} (1) can be further relaxed. One only needs the approximate solutions $\{\sol^\e\}$ to be bounded in $L^{p}_\loc$ with $p>\frac{2n}{n+1}$ rather than $p>2$. In particular, for the case $n=2$, \emph{i.e.}, for isometric immersions of surfaces, we only require $\{\sol^\e= (\two^\e, \na^{\e,\perp}) \}$ --- equivalently, the corresponding connection 1-forms $\{\Omega^\e\}$; see Equations~\eqref{gauss}--\eqref{ricci}  below --- to be bounded in $L^{\frac{4}{3}+\delta}_\loc$. 

One may wonder how to make sense of a quadratic term of the form $A \star A$ for $A \in L^p$ with $\frac{2n}{n+1}< p<2$. Here and hereafter $\star$ denotes a quadratic combination, and here $A$ is a vector bundle-valued differential form (in fact, the connection 1-form). For this purpose we resort to the endpoint div-curl lemma established in \cite{bcm} by Briane, Casado-Díaz, and Murat, whose key idea is as follows: consider the Hodge decomposition $A = dB + d^*C+H$, where $H$ is the harmonic part of $A$ (for simplicity of presentation, we focus momentarily on $A$ defined on some closed manifold $(\M,g)$). In the Coulomb--Uhlenbeck gauge we have $d^*A=0$, which allows us to assume that $B=0$.

\end{comment}

\subsection{Organisation} 
The remaining parts are organised as follows.

In \S\ref{sec: weak continuity} we revisit the main results in Chen--Slemrod--Wang \cite{csw, csw'} within the geometrical framework of Cartan formalism. In \S\ref{sec: Su-Li} we discuss our recent preprint \cite{ls} on the fundamental theorem of submanifold theory with lower regularity assumptions than the putative sharp threshold suggested by S. Mardare in \cite{m05}, featuring an interplay of compensated compactness methods and gauge theoretic arguments. Finally, several open problems are discussed in \S\ref{sec: concl}.

\section{Weak continuity of Gauss--Codazzi--Ricci equations via compensated compactness}\label{sec: weak continuity}

In this section, we present an overview of the main results in Chen--Slemrod--Wang \cite{csw, csw'}. Our approach is more geometrical in nature, which may depart occasionally from the original treatment in \cite{csw, csw'}, but turns out to be convenient for further developments. %Our focus is centred on the weak continuity property of the Gauss--Codazzi--Ricci equations.

\subsection{Div-curl structure via the Cartan formalism}

The weak continuity result, Theorem~\ref{thm: csw} (1), is proved in \cite{csw} by exploiting the ``div-curl structure'' of  Gauss--Codazzi--Ricci Equations~\eqref{gauss, loc}--\eqref{ricci, loc} in local co-ordinates, and then applying the div-curl lemma of Murat and Tartar \cite{m, t1, t2}. The crucial structural information utilised in the argument is as follows:\footnote{In fact, not all of these information has been utilised here. See Remark~\ref{rem}.}
\begin{itemize}
    \item 
    the principal parts of Equations~\eqref{gauss, loc}--\eqref{ricci, loc}, \emph{i.e.}, the first-order linear differentials of the unknowns $h=\left\{h^\alpha_{ij}\right\}$ and $\kappa=\left\{\kappa^\alpha_{i\beta}\right\}$ (see Equation~\eqref{h and kappa} for definition) in the Codazzi $\&$ Ricci Equations~\eqref{codazzi, loc}, \eqref{ricci, loc}, are ``curl-type quantities'' of the form $\p_a A_b - \p_b A_a$;  and 
    \item 
    the nonlinear terms appearing on the zeroth order in Equations~\eqref{gauss, loc}--\eqref{ricci, loc} are quadratic combinations of $h$ and $\kappa$, which take the antisymmetric form $A_aA_b-A_aA_b$. 
\end{itemize}
In the above, $A_a$ are suitable 2-tensors consisted of components of $h$ and $\kappa$. Equivalently, $A_a$ are matrices, so their multiplications are not commutative.

The structural information above becomes transparent in E. Cartan's exterior caluclus formalism adapted to submanifold theory (abbreviated in the sequel as \emph{the Cartan formalism}), which is a classical topic in differential geometry \cite{spivak}. In local co-ordinates it reads:
\begin{eqnarray}
&&d\omega^i = \sum_j \omega^j \wedge \Omega^i_j;\label{first structure eq}\\
&&0=d\Omega^a_b + \sum_c \Omega^c_b\wedge \Omega^a_c,\label{second structure eq}
\end{eqnarray}
where $\{\omega^i\}_{1\leq i \leq n}$ is the orthonormal coframe on $(T^*\M,g)$ dual to $\{\p\slash\p_i\}_{i=1}^n$, and $\{\Omega^a_b\}_{1\leq a,b \leq n+k}$ is defined entry-wise by
\begin{eqnarray}
&& \Omega^i_j (\p_k) := g(\na_{\p_k}\p_i,\p_j);\label{Omega 1}\\
&& \Omega^i_\alpha (\p_j) \equiv -\Omega^\alpha_i (\p_j) := g^E\big(\two(\p_i,\p_j), \eta_\alpha\big);\label{Omega 2}\\
&& \Omega^\alpha_\beta(\p_j):=g^E\big(\na^E_{\p_j}\eta_\alpha,\eta_\beta\big).\label{Omega 3}
\end{eqnarray}
In the above, $\{\p_i\}$ is the orthonormal frame for $(T\M,g)$ dual to $\{\omega^i\}$, and $\Omega=\{\Omega^a_b\}$ is called the \emph{connection 1-form}. As in \S\ref{sec: intro}, $E$ is a rank-$k$ bundle over $\M$ equipped with a Riemannian metric $g^E$, its Levi-Civita connection $\na^\perp$, and an orthonormal frame $\{\eta_\alpha\}_{\alpha=n+1}^{n+k}$. If the existence of isometric immersion is given, then $E$ coincides with the normal bundle $T^\perp\M$; also, recalling Equation~\eqref{h and kappa} one has that $\Omega^i_\alpha(\p_j)=h^\alpha_{ij}$ and $\Omega^\alpha_\beta(\p_j)=\kappa^\beta_{i\alpha}$.

The \emph{second structural equation} of the Cartan formalism (Equation~\eqref{second structure eq}) is equivalent to the Gauss--Codazzi--Ricci Equations~\eqref{gauss}--\eqref{ricci} as purely algebraic identities, hence in the sense of distributions too.\footnote{The first structural equation~\eqref{first structure eq} is equivalent to the torsion-free condition of the Levi-Civita connection $\na$.} The connection 1-form takes values in the Lie algebra of antisymmetric matrices. Equations~\eqref{Omega 1}--\eqref{Omega 3} are written compactly as follows (${}^\top$ for transpose):
\begin{align*}
    \Omega = \begin{bmatrix}
        \na & \two\\
        -\two^\top & \na^E
    \end{bmatrix} \in \G \big(\M; T^*\M\otimes\sonk\big).
\end{align*}
We view the second structural Equation ~\eqref{second structure eq} as an identity on $\G \left(\M; \bigwedge^2 T^*\M\otimes\sonk\right)$, the space of antisymmetric matrix-valued 2-forms, and commonly denote
\begin{equation*}
    d\Omega + [\Omega \wedge \Omega] = 0.
\end{equation*}
Here $d$ acts on the differential form factor (\emph{i.e.}, $\bigwedge^\bullet T^*\M$) and $[\bullet \wedge \bullet]$ designates the intertwining of wedge product on the form factor and Lie bracket on the matrix factor (\emph{i.e.}, $\sonk$).

\subsection{Tools from the compensated compactness theory}
As the key analytical tool for our proof of  Theorem~\ref{thm: csw} (1), the wedge product theorem (Lemma~\ref{lemma: RRT} below; Robbin--Rogers--Temple \cite[Theorem~1.1]{rrt}) of the compensated compactness theory is a generalisation of the classical div-curl lemma \emph{\`{a} la} Murat--Tartar \cite{m, t1, t2}. It has the advantage of structural simplicity and symmetry: the first-order differential constraints are imposed for $d$ of both sequences $\{\alpha^\e\}$, $\{\beta^\e\}$ (in contrast to those in the classical version, \emph{i.e.}, divergence of one sequence and curl of the other), and it can be easily extended to the multilinear case.
\begin{lemma}\label{lemma: RRT}
Let $\alpha^\e \weak \overline{\alpha}$ in $L^s_\loc(\R^n)$ and $\beta^\e \weak \overline{\beta}$ in $L^r_\loc(\R^n)$,
where $\{\alpha^\e\}, \{\beta^\e\}, \overline{\alpha}$, and $\overline{\beta}$ are differential forms over $\R^n$
and $\frac{1}{s}+\frac{1}{r}=1$. Assume that $\{d\alpha^\e\} \subset W^{-1,s}_\loc(\R^n; T^*\R^n)$ and $\{d\beta^\e\} \subset W^{1,r}_\loc(\R^n; T^*\R^n)$ are pre-compact. Then $\alpha^\e \wedge \beta^\e\, \to\, \overline{\alpha} \wedge \overline{\beta}$ in the sense of distributions.
\end{lemma}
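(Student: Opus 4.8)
The plan is to argue by the classical compensated-compactness strategy: split each form into a part rigidified by its differential constraint and a complementary part, upgrade the constrained part to \emph{strong} convergence, and remove the residual weak-times-weak interaction by Stokes' theorem. It suffices to show $\int_{\R^n}\alpha^\e\wedge\beta^\e\wedge\varphi\to\int_{\R^n}\overline\alpha\wedge\overline\beta\wedge\varphi$ for every smooth compactly supported test form $\varphi$ of complementary degree. Multiplying by a cutoff $\chi$ equal to $1$ near $\operatorname{supp}\varphi$, I may assume $\alpha^\e,\beta^\e$ are supported in a fixed ball and bounded in $L^s$, $L^r$; the constraints persist, since $d(\chi\alpha^\e)=\chi\,d\alpha^\e+d\chi\wedge\alpha^\e$, where the first term stays precompact in $W^{-1,s}_\loc$ and the second is $L^s$-bounded, hence precompact in $W^{-1,s}_\loc$ by Rellich.

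Next I would invoke the Hodge--Helmholtz decomposition on $\R^n$. Writing $\Delta=dd^\ast+d^\ast d$ for the Hodge Laplacian and $G=\Delta^{-1}$ for its solution operator (a Calderón--Zygmund operator of order $-1$, so that $dG,\,d^\ast G\colon L^p\to W^{1,p}$ are bounded on compactly supported inputs), every such form admits $\alpha^\e=d\phi^\e+Q\alpha^\e$ with $\phi^\e:=d^\ast G\alpha^\e$ and $Q\alpha^\e:=d^\ast dG\alpha^\e$. Since $G$ commutes with $d$, one has $Q\alpha^\e=d^\ast G(d\alpha^\e)$; as $d^\ast G\colon W^{-1,s}\to L^s$ is bounded and $\{d\alpha^\e\}$ is precompact in $W^{-1,s}_\loc$ with weak limit $d\overline\alpha$, it follows that $Q\alpha^\e\to Q\overline\alpha$ \emph{strongly} in $L^s$. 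Meanwhile $\phi^\e\weak\overline\phi:=d^\ast G\overline\alpha$ in $W^{1,s}$, whence $\phi^\e\to\overline\phi$ strongly in $L^s_\loc$ by Rellich--Kondrachov.

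With $\alpha^\e\wedge\beta^\e=(d\phi^\e)\wedge\beta^\e+Q\alpha^\e\wedge\beta^\e$, the second summand converges to $Q\overline\alpha\wedge\overline\beta$ as a strong-times-weak product in $L^s\times L^r\hookrightarrow L^1$ --- this is precisely where $\tfrac1s+\tfrac1r=1$ enters. For the first summand I would apply Stokes: as $\phi^\e\wedge\beta^\e\wedge\varphi$ has compact support, $\int d(\phi^\e\wedge\beta^\e\wedge\varphi)=0$ gives $\int d\phi^\e\wedge\beta^\e\wedge\varphi=\pm\int\phi^\e\wedge d\beta^\e\wedge\varphi\pm\int\phi^\e\wedge\beta^\e\wedge d\varphi$ (signs dictated by degrees). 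In the first resulting term the derivative has been transferred onto $\beta^\e$, and the $W^{1,s}$--$W^{-1,r}$ duality pairing of the bounded, weakly convergent $\phi^\e\wedge\varphi$ against $d\beta^\e\to d\overline\beta$ (the latter strong by precompactness of $\{d\beta^\e\}$ in $W^{-1,r}_\loc$, the natural div--curl constraint) passes to the limit; in the second, $\phi^\e\to\overline\phi$ strongly in $L^s_\loc$ pairs against $\beta^\e\weak\overline\beta$. Reversing Stokes at the limit identifies the sum with $\int d\overline\phi\wedge\overline\beta\wedge\varphi$, and adding the $Q$-contribution yields $\int\overline\alpha\wedge\overline\beta\wedge\varphi$, as required.

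The crux --- and the only place genuine analysis enters --- is the passage from the differential constraints to \emph{strong} convergence: the naive weak limit of $\alpha^\e\wedge\beta^\e$ need not be $\overline\alpha\wedge\overline\beta$, and what rescues it is the one-derivative gain furnished by the order $-1$ operator $G$, which converts precompactness of $\{d\alpha^\e\}$ in $W^{-1,s}_\loc$ into strong $L^s$-convergence of $Q\alpha^\e$ and, via Rellich, strong $L^s_\loc$-convergence of the potential $\phi^\e$. The remaining technicalities --- the Calderón--Zygmund mapping properties of $dG$, $d^\ast G$ on $\R^n$ (including the borderline behaviour of $G$ for $n=2$, circumvented by working directly with the homogeneous multipliers $\xi/|\xi|^2$) and the bookkeeping of form degrees and signs in Stokes' formula --- are routine and do not affect the structure of the argument.
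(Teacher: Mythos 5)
The paper does not prove this lemma at all: it is quoted verbatim from Robbin--Rogers--Temple \cite[Theorem~1.1]{rrt} and used as a black box, so there is no in-paper argument to compare against. Your reconstruction is correct and is in fact the method of the cited source (whose title is precisely ``On weak continuity and the Hodge decomposition''): localise by a cutoff, split $\alpha^\e = d\phi^\e + d^*dG\alpha^\e$ via the Hodge--Helmholtz decomposition, upgrade the constrained piece $d^*G(d\alpha^\e)$ to strong $L^s$-convergence using the order $-1$ gain, get strong $L^s_\loc$-convergence of the potential $\phi^\e$ from Rellich, and kill the remaining weak--weak interaction by transferring the derivative onto $\beta^\e$ via Stokes and pairing the strongly convergent $d\beta^\e$ in $W^{-1,r}_\loc$ against the weakly convergent $\phi^\e\wedge\varphi$ in $W^{1,s}_0$. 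Two remarks. First, you have silently (and correctly) read the hypothesis on $\{d\beta^\e\}$ as pre-compactness in $W^{-1,r}_\loc$; the ``$W^{1,r}_\loc$'' in the statement is a typo, as a comparison with \cite{rrt} confirms, and it is worth saying so explicitly rather than fixing it tacitly. Second, two of the steps you dismiss as routine do deserve a sentence each in a complete write-up: the integration-by-parts identity $\int d\phi^\e\wedge\beta^\e\wedge\varphi = \mp\langle d\beta^\e,\phi^\e\wedge\varphi\rangle \mp \int\phi^\e\wedge\beta^\e\wedge d\varphi$ is not a pointwise Stokes computation when $\beta^\e$ is merely $L^r$ with distributional $d\beta^\e\in W^{-1,r}$, and should be justified by mollifying $\beta^\e$ and passing to the limit in each term; and the boundedness of $d^*G$ from compactly supported elements of $W^{-1,s}$ into $L^s_\loc$ is best seen by writing such an element as $g_0+\sum_j\partial_j g_j$ with $g_j\in L^s$ and invoking the Calder\'on--Zygmund bounds for the zero-order multipliers together with local bounds for the order $-1$ Riesz potential. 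Neither point changes the structure of your argument.
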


\subsection{A proof for weak continuity  of Gauss--Codazzi--Ricci}
With the above preparations, we are ready to sketch a simple proof of the weak continuity property of Gauss--Codazzi--Ricci equations. The current form of the proof stays close to that in \cite{cl2, cg}.
\begin{proof}[Proof of Theorem~\ref{thm: csw} (1)]
    Consider the connection 1-forms $\{\Omega^n\} \subset L^p_\loc\big(\M; T^*\M\otimes\sonk\big)$ associated to the sequence of approximate solutions to the Gauss--Codazzi--Ricci equations. Then
    \begin{align*}
        d\Omega_n + [\Omega_n \wedge \Omega_n] = \eta_n \,\longrightarrow\, 0 \qquad \text{ in } W^{-1,p}_\loc. 
    \end{align*}

By assumption, one has the weak convergence $\Omega^n \weak \overline{\Omega}$ in $L^p_\loc$ (and hence in $L^2_\loc$), so $\{d\Omega_n\}$ is bounded in $W^{-1,p}_\loc$.  On the other hand, $\{[\Omega_n \wedge \Omega_n]=\eta_n-d\Omega_n\}$ is bounded in $L^{p/2}_\loc$ by Cauchy--Schwarz, where $p>2$. By Sobolev embedding, Rellich Lemma, and an interpolation of negative order Sobolev spaces, we deduce that $\{d\Omega_n\}$ is precompact in $W^{-1,2}_\loc$.

Applying Lemma~\ref{lemma: RRT} to $r=s=2$ and $\alpha^\e=\beta^\e=\Omega_n$, one obtains $[\Omega_n \wedge \Omega_n] \to \left[\overline{\Omega}\wedge\overline{\Omega}\right]$ in the sense of distributions. Thus $d\overline{\Omega} + \left[\overline{\Omega} \wedge \overline{\Omega}\right]=0$; \emph{i.e.}, the weak $L^p_\loc$-limits of the extrinsic geometric quantities are weak solutions to the Gauss--Codazzi--Ricci equations.  \end{proof}

%It is important to notice the following:

Two important remarks are in order:

\begin{remark}\label{rem}
In the proof above, only the quadratic structure of the nonlinear terms in the Gauss--Codazzi--Ricci equations has been exploited (via the Cauchy--Schwarz inequality applied to $[\Omega_n \wedge \Omega_n]$). That $\Omega_n$ is an antisymmetric matrix-valued differential form has played no role by now. 
\end{remark}

\begin{remark}\label{rem+}
Invoking the harmonic analytic version of the compensated compactness theory \emph{\`{a} la} Coifman--Lions--Meyer--Semmes \cite{clms}, one may strengthen the conclusion in Lemma~\ref{lemma: RRT} to
\begin{align*}
    \alpha^\e \wedge \beta^\e \, \longrightarrow \, \overline{\alpha} \wedge \overline{\beta}\qquad \text{in } \mathcal{H}^1_{\loc},
\end{align*}
where $\mathcal{H}^1_{\loc}$ is the local Hardy space.
\end{remark}

\subsection{Existence of isometric immersions via compensated compactness}
Now let us explain the proof for the existence of isometric immersions of certain families of negatively curved surfaces in Theorem~\ref{thm: csw} (2).  Chen--Slemrod--Wang leveraged Theorem~\ref{thm: csw} (1) as a \emph{compensated compactness framework}: once a sequence of approximate solutions $\{\sol^\e\}$ has been constructed and shown to be uniformly bounded in $L^p_\loc$ ($p>2$), we may pass to the limit to obtain a weak solution via Lemma~\ref{lemma: RRT}. Here $(n,k)=(2,1)$, \emph{i.e.}, we are considering isometric immersions of surfaces into $\R^3$, in which case there are 1 Gauss, 2 Codazzi, and 0 (trivial) Ricci equations.

In \cite{csw'}, the authors exploited the approach of vanishing artificial viscosity as follows:
\begin{enumerate}
    \item 
     eliminating one of the three components of the second fundamental form $h=\{h_{ij}\}_{1\leq i, j \leq 2}$ ($h_{12}=h_{21}$) through the Gauss equation,
     \item 
     adding to the right-hand side of the Codazzi equations the viscous terms $\nu \p_{xx}h$, and
     \item 
     passing to the limit $\nu \searrow 0$. 
\end{enumerate}
The regularised terms $h \equiv h^\nu$ serve as the approximate solutions to the Gauss--Codazzi equations, with uniform $L^\infty_\loc$-bounds (\emph{i.e.}, $p=\infty$) established via the method of \emph{invariant regions}. This method can be regarded as a maximum principle for systems of hyperbolic PDEs, which unfortunately requires stringent structural conditions on the source terms, hence severely limiting its applicability. Up to now, isometric immersions of only a few special classes of negatively curved surface metrics have been obtained via this approach \cite{csw', chw, me-ARMA}: they are 1-parameter families of (not necessarily small!) variations of the metrics of classical minimal surfaces, \emph{e.g.}, the catenoid, the helicoid, and the Enneper surface. The Christoffel symbols of these metrics take simple nice forms that satisfy the structural conditions for the method of invariant regions.

The latter papers \cite{chw, me-ARMA} analysed a system of hyperbolic PDEs (to which the method of invariant regions was applied) derived from the Gauss--Codazzi equations. This system is slightly different from that in the pioneering paper \cite{csw'}, and it yields the existence of $C^{1,1}$-isometric immersions of more general families of negatively curved  metrics. Despite all these advantages, one key discoveries in \cite{csw'}, however, went missing from \cite{chw, me-ARMA}: the ``\emph{fluid dynamic formulation}'' of the Gauss--Codazzi equations detailed below.

Considering the ``fluid variables'' $(\rho, u, v, p)$ specified via
\begin{align*}
  \frac{h_{11}}{\sqrt{\det g}} = \rho v^2+p,\qquad  \frac{h_{12}}{\sqrt{\det g}}=-\rho uv,\qquad  \frac{h_{22}}{\sqrt{\det g}}= \rho u^2+p,
\end{align*}
as well as the constitutive relation for the Chaplygin gas:
\begin{align*}
   p = -\rho^{-1}, 
\end{align*}
we see that the Codazzi equations are formally equivalent to the balance law of momentum:
\begin{equation}\label{fluid 1}
    \begin{cases}
    \p_x(\rho u v) + \p_y (\rho v^2+p) = R_1 \equiv -(\rho v^2+p) \G^2_{22} -2\rho uv\G^2_{12} - (\rho u^2+p) \G^2_{11},\\
    \p_x(\rho u^2+p) + \p_y (\rho u v) = R_2 \equiv -(\rho v^2+p) \G^1_{22} -2\rho uv\G^1_{12} - (\rho u^2+p) \G^1_{11},
    \end{cases}
\end{equation}
while the Gauss equation is formally equivalent to the Bernoulli law:
\begin{align}\label{fluid 2}
    \rho = \frac{1}{\sqrt{q^2+K_\M}},\qquad q^2 := {u^2+v^2},
\end{align}
where $K_\M$ is the Gaussian curvature determined by $g$ (Gauss's Theorema Egregium).

Equations~\eqref{fluid 1} and \eqref{fluid 2} constitute the system of compressible Euler equations with source term $(R_1, R_2)^\top$, with $\rho$ interpreted as the fluid density, $(u,v)^\top$ as the flow velocity, and $p$ as the pressure. What makes this analogy more appealing is the following observation in \cite{csw'}: define the sonic speed $c:=\sqrt{p'(\rho)}$ and recall that $q$ is the flow speed; Bernoulli law~\eqref{fluid 2} implies 
\begin{align*}
    c^2-q^2 = K_\M.
\end{align*}
Hence, the cases where $K_\M$ is positive, zero, or negative for the isometric immersions correspond exactly to those where the associated compressible Euler flow is subsonic, sonic, or supersonic.

In summary, the mixed-type scenarios of the Gauss--Codazzi system and its fluid dynamic formulation have such exact correspondence with each other. As commented in \S\ref{sec: concl} below, we hope that further investigations on this correspondence may lead to more results on the elliptic-hyperbolic mixed-type Gauss--Codazzi equations.

\section{Fundamental theorem of submanifold theory with supercritical weak regularity}
\label{sec: Su-Li}

In this section, we sketch a proof of Theorem~\ref{thm: fundamental}, which holds the record of the best regularity assumptions up to date. The key ingredient is an interplay of geometrical (gauge-theoretical) observations and analytical techniques (compensated compactness and Hardy-BMO duality).

\subsection{Historical developments on regularity assumptions}
We first put things into perspectives. The fundamental theorem of submanifold theory --- given a solution $\mathfrak{S}$ to the Gauss--Codazzi--Ricci equations on a simply-connected closed Riemannian manifold $(\M^n,g)$, one can find an isometric immersion $\Phi: (\M^n,g) \to (\R^{n+k},\delta)$ whose extrinsic geometry coincides with $\mathfrak{S}$ --- has also long been a folklore theorem in the $C^\infty$-category. A careful proof was written down by Tenenblat (1971) \cite{ten}.

Over the past decades, lasting endeavours have been devoted to lowering the regularity assumptions. Denote by $\X$ the regularity class of $\Phi$. 
\begin{itemize}
    \item 
    Hartman--Wintner (1950) \cite{hw} established the existence of $\Phi \in \X = C^3$ given $\mathfrak{S}\in C^1$;
\item 
 S. Mardare (2003; 05) \cite{m03, m05} extended the fundamental theorem of surface theory (\emph{i.e.}, $(n,k)=(2,1)$) to $\X = W^{2,\infty}$ given $\mathfrak{S} \in L^\infty$, and then to  $\X = W^{2,p}$ given $\mathfrak{S} \in L^p$ for arbitrary $p>2$;
\item
 Szopos (2008) \cite{sz} obtained the higher dimensional and codimensional analogue; that is, for any $n \geq 2$ one can find an isometric immersion of $(\M^n,g)$ into $\R^{n+k}$ in regularity class $\X = W^{2,p}$,  given $\mathfrak{S} \in L^p$ for arbitrary $p>n$ and $k \geq 1$. Alternative arguments that are geometrical in nature can be found in \cite{cl, me-gauge, m07}.
\end{itemize}
 
S. Mardare \cite{m05} suggested that the fundamental theorem of submanifold theory might be unachievable for $W^{2,p}$-isometric immersions with $p \leq n$ (Remark~\ref{remark: key for W2p}), based on a heuristic argument that treats everything as scalars. Nonetheless, by further exploiting the antisymmetric  structure of the connection 1-form $\Omega$ in the Cartan formalism (compared to Remark~\ref{rem}), X. Su and the author showed in \cite{ls} that the fundamental theorem extends to the \emph{critical} case $\X=W^{2,n}$ for $n \geq 3$; in fact, one may go \emph{slightly supercritical} to $\X = L^{q,n-q}_{2,w}$ for any $q \in ]2,n]$. This is the content of Theorem~\ref{thm: fundamental}. The critical case $(n,k)=(2,1)$ and $\X=W^{2,2}$, on the other hand, was established by Litzinger \cite{litzinger}.

\subsection{Gauge theory}\label{subsec: gauge}
We now change the perspectives: consider the second structural equation~\eqref{second structure eq} as a global identity on the frame bundle over $(\M,g)$.
To construct an isometric immersion $\Phi:(\M,g)\emb \R^{n+k}$, one may solve a \emph{Pfaff} system
\begin{align}\label{pfaff, new}
dP + \Omega P =0 
\end{align}
followed by a \emph{Poincar\'{e}} system 
\begin{equation}\label{poincare, new}
d\Phi = \omega P,
\end{equation}
where $\Omega \in \G\left(T^*\M \otimes \so(n+k)\right)$ is the connection 1-form of the Cartan formalism, $P: \M \to SO(n+k)$ represents a global change of co-ordinates (known as a \emph{gauge transform}), and $\omega:=\big(\omega^1,\ldots,\omega^n,0,\ldots,0\big)^\top$ where $\left\{\omega^i\right\}$ is a local orthonormal coframe on $T^*\M$. The key to the construction of $\Phi$ lies in solving for $P$ satisfying the Pfaff equation~\eqref{pfaff, new}; once $P$ is found, solving the Poincar\'{e} equation~\eqref{poincare, new} is straightforward. The above arguments concerning the Pfaff and Poincar\'{e} systems already appeared in Tenenblat (1971) \cite{ten}, and have been developed and utilised in many later works \cite{m03, m05, m07, cl, cl2, me-gauge, sz, litzinger, new1, new2}.

To find a gauge transform $P$ solving the Pfaff equation~\eqref{pfaff, new}, we recast this problem into the framework of curvatures (of connections on principal fibre bundles). As an affine connection on the frame bundle $\pi: \mathcal{F}\left(T\M \oplus E\right) \to \M$, the connection 1-form $\Omega$ has its \emph{curvature 2-form}:
\begin{equation}\label{def, curvature}
\mathfrak{F}_\Omega := d\Omega+ [\Omega \wedge \Omega] \in \G\left(\M; \bigwedge^2 T^*\M \otimes \sonk\right).
\end{equation}
The second structural equation~\eqref{second structure eq} thus expresses the \emph{flatness} of $\Omega$, namely that $$\mathfrak{F}_\Omega=0.$$

The gauge group (:= the group of gauge transforms) acts on connection 1-forms and curvature 2-forms respectively through
\begin{equation}\label{gauged connection 1-form}
P^\#\Omega := P^{-1}dP + P^{-1}\Omega P
\end{equation}
and 
\begin{equation}\label{gauged curvature}
 P^\#\mathfrak{F}_\Omega = d\left(P^\#\Omega\right) + \left[\left(P^\#\Omega\right)\wedge\left(P^\#\Omega\right)\right].
\end{equation}
%The juxtaposition $P^{-1}dP$ and $P^{-1}\Omega P$ designates matrix multiplication. 
Since $P^{-1}$ takes values of invertible matrices \emph{a.e.}, once we obtain $P$ satsifying $P^\#\Omega = 0$, the Pfaff  equation~\eqref{pfaff, new} immediately follows. Therefore, \emph{to prove the existence of an isometric immersion $\Phi: (\M,g)\emb \rnk$ amounts to showing that the connection 1-form $\Omega$ is (locally) gauge-equivalent to the trivial connection}.  To simplify our presentation and only streamline the main ideas of the proof, we omit the ``local-to-global'' issues here. See \cite[\S 3, Proof of Lemma~0.9]{me-gauge} and \cite[\S 4, Step~1]{ls} for details.

Before moving on to the proof of Theorem~\ref{thm: fundamental}, we emphasise that the mathematical theory of principal fibre bundles, which is essentially equivalent to the gauge theory by virtue of T.-T. Wu $\&$ C.-N. Yang's ``dictionary'' (1975) \cite{wy}, has both played a foundational role in modern theoretical physics and brought forth exciting new developments in low-dimensional topology, algebraic geometry, and symplectic geometry, etc. See, \emph{e.g.},  H\'{e}lein \cite{h}.

\subsection{Proof of the supercritical fundamental theorem of submanifold theory}

We now sketch the proof of Theorem~\ref{thm: fundamental} in \cite{ls}. The key analytic tool is a variant of  \cite[Lemma~3.1]{rs}.
\begin{lemma}\label{lem: riviere}
Let $U\subset\R^n$ be a smooth bounded domain, $k$ be a natural number, and $r\in ]2,n[$. There exists some $\e_{\rm Uh} = \e(U,n,k,r)>0$ such that the following holds. Assume that $\Omega \in L^{r,n-r}\left(U;\so(n+k)\otimes\bigwedge^1\R^n\right)$ satisfies $\|\Omega\|_{L^{r, n-r}} \le \e$ for some $\e \in \left]0,\e_{\rm Uh}\right[$. Then one can find $P \in {L_1^{r, n-r}}(U;SO(n+k))$ and   $\xi \in {L_1^{r, n-r}}\left(U;\so(n+k)\otimes\bigwedge^{n-2}\R^n\right) $ such that
\begin{subequations} 
\begin{align}
P^{-1} d P+P^{-1} \Omega P & =\star d \xi & & \text { in } U, \label{xxa, new}\\
d(\star \xi) & =0 & & \text { in } U, \label{xxb, new}\\
\xi & =0  & & \text { on } \partial  U. \label{xxc, new}
\end{align}
\end{subequations} 
Moreover, we have the estimate
\begin{equation} \label{zz, new}
\|d P\|_{L^{r, n-r}}+\|d \xi\|_{L^{r, n-r}} \leq C\|\Omega\|_{L^{r, n-r}} \le C\varepsilon
\end{equation}
for some $C=C(n,k,r)$. 
\end{lemma}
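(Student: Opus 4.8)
The plan is to follow the method of continuity, in the spirit of Uhlenbeck's construction of Coulomb gauges and its Morrey-space adaptation by Rivi\`{e}re--Struwe \cite{rs}, of which this lemma is a variant. First I would observe that the gauge conditions \eqref{xxa, new}--\eqref{xxc, new} amount, via Hodge theory on $U$, to placing $\Omega$ into \emph{Coulomb gauge}: setting $A := P^{-1}dP + P^{-1}\Omega P$ and writing $A = \star d\xi$ with $d(\star\xi)=0$ and $\xi|_{\partial U}=0$ forces $d^\ast A = \pm\star d\star\star d\xi = 0$ automatically, while the constraint and boundary condition pin down the potential $\xi$; conversely, any $A$ with $d^\ast A=0$ and the matching boundary behaviour admits such a $\xi$ by the Hodge--Morrey decomposition on $U$. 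Thus the task reduces to producing $P\in L_1^{r,n-r}(U;SO(n+k))$ rendering the gauged connection co-closed.

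Next I would interpolate: for $t\in[0,1]$ set $\Omega_t := t\Omega$ and let $I\subset[0,1]$ collect those $t$ for which a solution pair $(P_t,\xi_t)$ to \eqref{xxa, new}--\eqref{xxc, new} (with $\Omega$ replaced by $\Omega_t$) exists in the asserted class and obeys \eqref{zz, new}. Since $0\in I$ (take $P_0=\mathrm{Id}$, $\xi_0=0$), the set $I$ is nonempty, and it suffices to show it is both open and closed, whence $I=[0,1]$ and $1\in I$ is the claim. Openness follows from the implicit function theorem in the Banach spaces $L_1^{r,n-r}$: writing $P=P_te^u$ with $u$ valued in $\so(n+k)$ and linearising $(u,\xi)\mapsto P^{-1}dP+P^{-1}\Omega_tP-\star d\xi$ together with the constraint, the principal part is an elliptic Hodge-type operator whose invertibility on $L_1^{r,n-r}$ (with the prescribed boundary conditions) comes from the Morrey-space elliptic estimates for Riesz potentials \emph{\`{a} la} Adams; the remaining terms are lower order and are dominated using the smallness $\|\Omega\|_{L^{r,n-r}}\le\e$, so solutions persist under small changes of $t$.

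For closedness---and simultaneously for the a priori bound \eqref{zz, new}---I would apply $d^\ast$ to \eqref{xxa, new} to obtain an elliptic equation of Hodge type for $dP$, schematically $\Delta(\,\cdot\,)=d^\ast\Omega_t+Q(dP,\Omega_t)$ with $Q$ quadratic. Morrey elliptic regularity then gives $\|dP_t\|_{L^{r,n-r}}+\|d\xi_t\|_{L^{r,n-r}}\le C\|\Omega_t\|_{L^{r,n-r}}+C\|\Omega_t\|_{L^{r,n-r}}\big(\|dP_t\|_{L^{r,n-r}}+\|d\xi_t\|_{L^{r,n-r}}\big)$, where the product (multiplication) estimates in the critical space $L^{r,n-r}$ and the bound $\|P_t\|_{L^\infty}\le 1$ (valid since $P_t\in SO(n+k)$) are used to control $Q$. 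Choosing $\e_{\rm Uh}$ so small that $C\e<\tfrac12$, the quadratic term is absorbed into the left-hand side, yielding \eqref{zz, new} with a uniform constant. This uniform bound lets me extract, along any $t_j\to t_\ast$ in $I$, weak $L_1^{r,n-r}$ limits and, via the compact Rellich-type embedding for Morrey spaces, strong limits enough to pass to the limit in the nonlinear terms, so $t_\ast\in I$.

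The main obstacle I anticipate is precisely the critical, scaling-invariant nature of the Morrey norm $L^{r,n-r}$: unlike the subcritical regime $p>n$ underlying Theorem~\ref{thm: fundamental, Mardare-Szopos}, there is no room to lose scaling, so one must use the \emph{sharp} Morrey-space elliptic estimates and close the quadratic terms purely through the smallness of $\|\Omega\|_{L^{r,n-r}}$ rather than by gaining integrability. Setting up the multiplication estimates in $L^{r,n-r}$ correctly, and ensuring the linearised operator remains a uniform isomorphism along the continuity path, is where the delicate analysis lies; here the antisymmetric, $\so(n+k)$-valued structure of $\Omega$ is what keeps the gauge $P$ bounded in $SO(n+k)$ and thereby renders these critical estimates admissible---the very structural feature contrasted with Remark~\ref{rem}.
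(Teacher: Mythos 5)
The paper offers no proof of this lemma --- it is quoted as a variant of Rivi\`{e}re--Struwe \cite{rs}, Lemma~3.1 --- and your continuity-method argument is essentially the proof given there (and in Uhlenbeck's original construction \cite{u}): reduce the three conditions to the Coulomb constraint $d^*\bigl(P^{-1}dP+P^{-1}\Omega P\bigr)=0$ via Hodge theory on $U$, run the path $t\mapsto t\Omega$, prove openness by the implicit function theorem in $L^{r,n-r}_1$ and closedness via the a priori Morrey estimate, absorbing the quadratic terms through the smallness of $\|\Omega\|_{L^{r,n-r}}$, Adams-type potential estimates, and the $L^\infty$-bound on $P$ coming from compactness of $SO(n+k)$. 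This matches the intended source; the only step your sketch compresses is the usual reduction to smooth $\Omega$ followed by approximation and weak compactness of the gauges, which your uniform a priori bound renders routine.
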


Here $P$ is said to be a \emph{(Coulomb)--Uhlenbeck gauge} of $L^{r,n-r}_1$-regularity. See Uhlenbeck's seminal works \cite{u,u'} and Wehrheim's exposition \cite{w}. Its essential feature is that teh gauge-transformed connection 1-form is coexact; \emph{i.e.}, 
$$d^*\left(P^\#\Omega\right)=0.$$ This is equivalent to Equation~\eqref{xxa, new} by Hodge decomposition and the boundary condition. 

Also note the continuous embedding:
\begin{equation*}
    L^{q,n-q}_{w,\ell} \emb L^{r,n-r}_{\ell}\qquad \text{for any } 2 < r <q \leq n \text{ and } \ell \in \mathbb{N}.
\end{equation*}
In what follows, the index $r$ is understood as taken in this range unless otherwise specified.

\begin{proof}[Sketched proof of Theorem~\ref{thm: fundamental}]

Denote $\Xi := P^\#\Omega\big|_\ball$, where $\ball \subset \R^n$ is a small Euclidean ball. Our goal, as summarised in the penultimate paragraph of \S\ref{subsec: gauge}, is to show that $\Xi=0$. (We focus only on the existence part; the uniqueness part is routine.) In this proof, $C(a_1, a_2, \ldots, a_n)$, $C'(a_1, a_2, \ldots, a_n)$ and the like are generic constants that may change from line to line and that depend only on the parameters $a_1, a_2, \ldots, a_n$. 

By Lemma~\ref{lem: riviere}, Equation~\eqref{xxa, new}, for some $\xi \in {L_1^{r, n-r}}\left(\ball;\so(n+k)\otimes\bigwedge^{n-2}\R^n\right)$ with $2 < r <q \leq n$, one has that $\Xi = \star d \xi$. Thanks to Equation~\eqref{gauged curvature}, $\xi$ satisfies
\begin{equation}\label{xi eq, new}
\begin{cases}
d\star d\xi +\big[\star d\xi \wedge \star d\xi\big] = 0 \qquad \text{in } \ball,\\
\xi = 0 \qquad \qquad \text{on } \p\ball.
\end{cases}
\end{equation}
The boundary condition is taken from \eqref{xxc, new} and, in view of \eqref{xxb, new}, this PDE is elliptic.

Since $L_1^{r, n-r} \emb \dot{W}^{1,2}$ on $\ball$, we may take the wedge product of $\xi$ with Equation~\eqref{xi eq, new} and deduce via integration by parts that
\begin{align*}
 \left\|\star d\xi\right\|^2_{L^2\left(\ball\right)}= \left\| d\xi\right\|^2_{L^2\left(\ball\right)} = -\int_\ball \xi \wedge \star d\xi \wedge \star d\xi.
\end{align*}
Then by $\mathcal{H}^1$ (Hardy space)-BMO duality 
\begin{equation*}
\left\| d\xi\right\|^2_{L^2\left(\ball\right)} \le C(n,k) [\xi]_{BMO(\ball)}  \left\|\big[ \star d\xi \wedge \star d\xi \big]\right\|_{\mathcal{H}^1(\R^n)}.
\end{equation*}
We identify $\xi$ with its extension-by-zero on $\R^n$ that lies in ${L_1^{r, n-r}}\left(\R^n;\so(n+k)\otimes\bigwedge^{n-2}\R^n\right)$. Also, thanks to the endpoint Sobolev embedding, $2< r < q\leq n$, and Lemma~\ref{lem: riviere}, it holds that
\begin{align*}
 [\xi]_{{\rm BMO}(\ball)} \leq C \|d \xi\|_{L^{2, n-2}(\ball)}\leq C\|d \xi\|_{L^{r, n-r}(\ball)}\leq  C\|\Omega\|_{L^{r, n-r}(\ball)},
\end{align*}
where $C=C(n,k,r,q)$. Choosing $\ball$ sufficiently small, we can make $[\xi]_{{\rm BMO}(\ball)} \leq C'(n,k,r)\e$, where $\e \in \left]0,\e_{\rm Uh}\right[$ is an arbitrary constant for $\e_{\rm Uh}$ as in Lemma~\ref{lem: riviere}.  

To bound the Hardy norm $\left\|\big[ \star d\xi \wedge \star d\xi\big] \right\|_{\mathcal{H}^1(\R^n)}=\left\|\big[ \Xi \wedge \Xi \big]\right\|_{\mathcal{H}^1(\R^n)}$, we are once again in the situation of Theorem~\ref{thm: csw} (1) (Chen--Slemrod--Wang \cite{csw}): since $
d\Xi + [\Xi \wedge \Xi]=0$\footnote{
Note that $d\Xi + [\Xi \wedge \Xi]=\mathfrak{F}_{P^\#\Omega}\Big|_\ball$, the localised gauge-transformed curvature 2-form of $\Omega$. By  Equation~\eqref{gauged curvature} it equals to $P^\#\left(\mathfrak{F}_\Omega\right)$, which is zero because $\mathfrak{F}_\Omega=0$ is tantamount to the Gauss--Codazzi--Ricci equations.} and $\Xi \in L^{q,n-q}_{w}(\ball)\emb L^{r, n-r}(\ball) \emb L^r(\ball)$ for any $2 < r <q \leq n$, one may directly adapt the proof of Theorem~\ref{thm: csw} (1) above, together with Remark~\ref{rem+}, to deduce that 
\begin{align*}
\left\|\big[ \star d\xi \wedge \star d\xi\big] \right\|_{\mathcal{H}^1(\R^n)} \leq C(n,r) \|d\xi\|^2_{L^2(\ball)}.
\end{align*}

Therefore, chaining together the previous three estimates, we arrive at 
\begin{align*}
   \|d\xi\|^2_{L^2(\ball)} \leq C''(n,k,r,q) \e   \|d\xi\|^2_{L^2(\ball)}.
\end{align*}
By choosing $\e < 0.99 \left[C''(n,k,r,q)\right]^{-1}$ one concludes that $d\xi=0$ and hence $\Xi = 0$. This is what we want to prove.    \end{proof}

In the proof above, gauge theoretic arguments are crucial for us to go ``supercritical''  on the fundamental theorem of submanifold theory. In turn, the analysis of Gauss--Codazzi--Ricci equations may also shed new lights on the analytical aspects of the gauge theory. In this direction, some tentative exploration has recently been carried out by the author \cite{me-gauge}.

\section{Problems for future investigation}\label{sec: concl}

In this concluding section, we pose several questions for prospective investigations. The list below is by no means exhaustive, and the questions thereof are chosen according only to the  research interest of the author. We do not intend to suggest that these are the most important questions concerning either Gauss--Codazzi--Ricci equations or isometric immersions.

\begin{enumerate}
    \item 
\textbf{Can one further relax the regularity  conditions for the fundamental theorem of submanifold theory?} That is, can we prove the analogue of Theorem~\ref{thm: fundamental} with $\X$ chosen to be a larger function space with coarser topology than $L^{q,n-q}_{2,w}$; $q \in ]2,n]$? 

This question has both theoretical and practical significance. On the one hand, it is closely related to the critical regularity conditions for the solubility of the gauge equation. See Uhlenbeck's seminal works \cite{u, u'} (as well as Wehrheim's exposition \cite{w}) and the important developments \cite{riviere, rs} by Rivi\`{e}re and Struwe. On the other hand, in elasticity theory and numerical computations, the function space of the utmost interest is perhaps $\X=W^{1,\infty}_\loc$ or $W^{1,{\rm BV}}_\loc$; the second derivatives of $\Phi \in \X$ may fail to be well-defined functions. For instance, in finite element methods, \emph{e.g.}, Regge calculus, one considers a  triangulated (hence Lipschitz/PL) manifold equipped with piecewise polynomial $\linf$-metrics, which are non-differentiable in normal directions to the edges. See \u{C}ap--Hu \cite{ch1, ch2} for recent progress on geometric theories of finite element methods in this direction.

We also note that $L^\infty$ and BV-solutions to the Gauss--Codazzi--Ricci equations (hence $W^{2,\infty}$- and $W^{2, {\rm BV}}$-isometric immersions) have been studied by Cao--Huang--Wang \cite{chw2} and Christoforou \cite{chris, chris2} within the framework of conservation laws.

\item 
Concerning the theory of compensated compactness: 
\begin{itemize}
    \item 
\textbf{What is the correct analogue for the wedge product theorem (Robbin--Rogers--Temple \cite{rrt}) with the first-order differential constraints imposed for $d^*$ instead of $d$?} This is particularly relevant to the application of the wedge product theorem to the gauge equation $d\Omega+[\Omega\wedge\Omega]=0$ (and hence to the Gauss--Codazzi--Ricci equations, in view of the Cartan formalism). Indeed, here the first-order differential constraint is the Coulomb--Uhlenbeck gauge condition: $d^*\Omega=0$. 

In the recent preprint \cite{me-wedge}, the author revisited and generalised the wedge product theorem to differential forms with lower regularity assumptions compared to Robbin--Rogers--Temple \cite{rrt}. Our work is largely motivated by Briane--Casado-D\'{i}az--Murat \cite{bcm}. The extension from \cite{bcm} to \cite{me-wedge} is philosophically analogous to the extension from Murat--Tartar's classical div-curl lemma \cite{m, t1, t2} to Robbin--Rogers--Temple \cite{rrt}.

    \item 
\textbf{What is the correct div-curl lemma-type statement for pairings of vector fields (or differential forms) in the Morrey space $L^{2,n-2}$?} In the simplest case, suppose that $E$, $B$ are $L^{2,n-2}_\loc$-vector fields on $\R^n$ such that $\sum_{k=1}^n \p_k E^k=0$ and $\p_i B^j - \p_j B^i=0$ in the sense of distributions for each $1\leq i,j \leq n$. Can one conclude that $E \cdot B \in \mathcal{H}^1_{\rm loc}$?  If the answer is affirmative, then one may extend the fundamental theorem of submanifold theory (Theorem~\ref{thm: fundamental}) to the endpoint case $q=2$ by adapting the arguments in \cite{ls}.  
\end{itemize}

%Cooking up examples of non-isometrically immersible metrics through counterexamples of comp comp?

\item 
\textbf{Can we establish global existence theorems of isometric immersions of surface metrics into $\R^3$ with \emph{non-positive} or \emph{sign-changing} Gaussian curvature?}

Arguably the most important aspect of the isometric immersions problem is the existence theory. Nearly all the existence theorems in the literature are about isometric immersions/embeddings of surfaces into $\R^3$ or other 3-manifolds, and are obtained by proving the solubility of Gauss--Codazzi equations. Meanwhile, most of these results assume that the Gaussian curvature $K_\M$ satisfies $K_\M >0$ (\emph{e.g.}, the Weyl problem \cite{w, nir, guanlu, me-BLMS}), $K_\M \geq 0$ \cite{gl}, or $K_\M<0$ \cite{ps, csw', chw, me-ARMA}, namely that the Gauss--Codazzi equations are elliptic, degenerate elliptic, or strictly hyperbolic. Very few existence results are known for the elliptic-hyperbolic mixed-type  or the degenerate hyperbolic  Gauss--Codazzi equations. The only exceptions, to the author's knowledge, are Q. Han's \emph{local} existence theorem for metrics with Gaussian curvature changing sign cleanly \cite{han} (which simplifies the earlier work \cite{cslin}  by C.~S. Lin), and the local existence theorem for isometric immersions of 3-manifold into $\R^6$ by Chen--Clelland--Slemrod--Wang--Yang \cite{ccswy}.

Nash's well-known global existence theorems of $C^1$ and $C^r$-isometric embeddings ($r \geq 3$) \cite{nashC1, nashCk} have no restrictions on the sign of curvature. The curvature is not well-defined for the $C^1$-isometric embeddings constructed using ``Nash wrinkles'' in \cite{nashC1}, while the $C^r$-embeddings in \cite{nashCk} obtained via the Nash--Moser iterations/the ``hard implicit function theorem'' requires high codimensions. The latter case yields a large number of (scalar) Ricci equations in the Gauss--Codazzi--Ricci system, from which little useful geometrical information can be extracted. In brief: Nash's famous results are of a rather different nature from those obtained via the analysis of the Gauss--Codazzi--Ricci equations.

\item 
In continuation of the above problem: \textbf{Can we further explore the fluid dynamic formulation, namely Equations~\eqref{fluid 1} $\&$ \eqref{fluid 2}, to prove global existence theorems for degenerate hyperbolic or mixed-type Gauss--Codazzi equations?}

It would be interesting to see that analytical results and techniques for sonic-supersonic or transonic 2-dimensional Euler flows, which is an extensively studied topic in gas dynamics and quasilinear conservation laws, shed new lights to the existence theory of degenerate hyperbolic or mixed-type Gauss--Codazzi equations. Some tentative explorations were carried out by Slemrod and the author \cite{lslem}, with the primary motivation being establishing a ``renormalised'' fluid dynamic formulation for Nash's $C^1$ (indeed, $C^{1,\alpha}$ for $\alpha < 1/7$) isometric immersion of the flat 2-torus in $\R^3$. 

We may also ask: \textbf{what are the geometrical characterisations (in terms of the Gauss--Codazzi equations and/or isometric immersions) for the shock waves in the weak solutions to the compressible Euler equations?}

In addition, let us bring to the reader's attention \cite{aclsw} by Acharya, G.-Q. Chen, Slemrod, Wang, and the author (2017), which reports some attempts on connecting Gauss--Codazzi equations to more general classes of PDE models in continuum mechanics. Such connections, to the knowledge of the author, remain largely formal up to date.

\item 
The fundamental theorem of surface/submanifold theory (see \cite{ten, sz, m05, m07, ls, cl, cl2, ciarlet', cgm, b, add, add'} among other works) asserts that isometric immersions can be constructed from the solutions to the Gauss--Codazzi--Ricci equations on \emph{simply-connected} domains. Indeed, on simply-connected domains one may solve for the Pfaff and Poincar\'{e} equations to obtain local isometric immersions chart by chart, and then glue them together via a simple patching argument. See  \cite[\S 3, Proof of Lemma~0.9]{me-gauge}; \cite[\S 4, Step~1]{ls} for details.

In view of the discussions above, we pose the question: \textbf{can one formulate and prove a version of the fundamental theorem of submanifold theory on non-simply-connected domains?} The framework of ``branched (isometric) immersions'' due to Gulliver--Osserman--Royden \cite{gor} may be the suitable one for this purpose.

\item 
Last but not least, due to the limitation of space we cannot elaborate on the applications of isometric immersions to the nonlinear elasticity theory, which is a topic of enormous significance. Let us point out, amongst many others,  the following works: 
\begin{itemize}
\item 
\cite{ciarlet-new, cgm, new1, ciarlet'} by P.~G. Ciarlet and coauthors on the intrinsic approach to nonlinear elasticity and the nonlinear Korn's inequality;

    \item 
\cite{elast1, elast2, kms} by Alpern, Kupferman, Maor, and Shachar on the asymptotic rigidity  of elastic shells (\emph{\`{a} la Re\u{s}etnjak} \cite{res1, res2}) and the geometric rigidity of elastic bodies (\emph{\`{a} la Friesecke--James--M\"{u}ller} \cite{fjm}). See also Chen--Slemrod--Li \cite{new2} and Conti--Dolzmann--M\"{u}ller \cite{cdm} for further developments;
\item 
\cite{new-hlp, new-l, new-lmp', new-lmp} by Lewicka, Pakzad, and coauthors on quantitative estimates for (non)-isometric immersibility of metrics, matching of isometries, and the shell theory.
\end{itemize}
We plan to investigate the following question: \textbf{can we further develop the mathematical theory of nonlinear elasticity in any of the above directions; in particular, with the help of recent advances in the analytical theory of Gauss--Codazzi--Ricci equations?}

\end{enumerate}

\medskip

\noindent
{\bf Acknowledgement}. I am deeply indebted to Professor~Gui-Qiang Chen for his patient guidance and unwavering support. Gratitude also goes to my teachers and friends --- Professors~Galia Dafni,  Pengfei Guan, Qing Han, Robert Hardt, Kaibo Hu, Feimin Huang, Jan Kristensen, Fang-Hua Lin, Yuning Liu, Reza Pakzad, Zhongmin Qian, Armin Schikorra, Stephen Semmes, Marshall Slemrod, Dehua Wang, Lihe Wang, Tao Wang, Tian-Yi Wang, and Wei Xiang, among many others --- for generously sharing their insights on topics in this article over all these years. %This article is based on the joint work with many of them. 

I began writing this article during my visit to UCL and Oxford in August 2024. Special thanks to Professors~Hao Ni and Gui-Qiang Chen for their hospitality. The charm of London and Oxford brings me fond and lasting memories that I shall forever cherish.

The research of SL is supported by NSFC Projects 12201399, 12331008, and 12411530065, Young Elite Scientists Sponsorship Program by CAST 2023QNRC001, National Key Research $\&$ Development Program 2023YFA1010900, and the Shanghai Frontier Research Institute for Modern Analysis.


\begin{thebibliography}{99}

\bibitem{aclsw}
A. Acharya, G.-Q. G. Chen, S. Li, M. Slemrod, and D. Wang, Fluids, elasticity, geometry, and the existence of wrinkled solutions, \textit{Arch. Ration. Mech. Anal.} \textbf{226} (2017), 1009--1060

\bibitem{ale}
A.~D. Aleksandrov, \textit{Vnutrennyaya Geometriya Vypuklyh Poverhnoste\u{i}} (in Russian), [Intrinsic Geometry of Convex Surfaces], OGIZ, Moscow-Leningrad, 1948


\bibitem{elast1}
I. Alpern, R. Kupferman, and C. Maor,
{Asymptotic rigidity for shells in non-Euclidean elasticity},
\textit{J. Funct. Anal.} \textbf{283} (2022), 109575

\bibitem{elast2}
I. Alpern, R. Kupferman, and C. Maor, Stability of isometric immersions of hypersurfaces, \textit{Forum of Mathematics, Sigma} \textbf{12} (2024), e43 %\texttt{<doi:10.1017/fms.2024.30>}


\bibitem{antman}
S.~S. Antman, {Ordinary differential equations of nonlinear elasticity I: Foundations of the theories of non-linearly elastic rods and shells}, \textit{Arch. Ration. Mech. Anal.} \textbf{61} (1976), 307--351


%\bibitem{adn1}
%S. Agmon, A. Douglis, and L. Nirenberg, {Estimates near the boundary for solutions of elliptic partial differential equations satisfying general boundary conditions. I}, \textit{Comm. Pure Appl. Math.} \textbf{12} (1959), 623--727


%\bibitem{adn2}
%S. Agmon, A. Douglis, and L. Nirenberg, {Estimates near the boundary for solutions of elliptic partial differential equations satisfying general boundary conditions. II},  \textit{Comm. Pure Appl. Math.} \textbf{17} (1964), 35--92

%\bibitem{ball}
%J.~M. Ball, Convexity conditions and existence theorems in nonlinear elasticity, \textit{Arch. Ration. Mech. Anal.} \textbf{63} (1976/77), 337--403
 
\bibitem{b}
J.~A. Blume, Compatibility conditions for a left Cauchy--Green strain field, \textit{J. Elasticity} \textbf{21} (1989),  271--308

\bibitem{bcm}
M. Briane, J. Casado-Díaz, and F. Murat, The div-curl lemma ``trente ans après'': an extension and
an application to the G-convergence of unbounded monotone operators, \textit{J. Math. Pures Appl.} \textbf{91} (2009), 476--494


\bibitem{chhw}
W. Cao, Q. Han, F. Huang, and D. Wang, The isometric immersion of surfaces with finite total curvature, \textit{ArXiv preprint} (2023), arXiv:2308.02832

\bibitem{chw}
W. Cao, F. Huang, and D. Wang, Isometric immersions of surfaces with two classes of metrics and negative Gauss curvature, \textit{Arch. Ration. Mech. Anal.} \textbf{218} (2015), 1431--1457


\bibitem{chw2}
W. Cao, F. Huang, and D. Wang, 
Isometric Immersion of surface with negative Gauss curvature and the Lax--Friedrichs scheme, \textit{SIAM J. Math. Anal.} \textbf{48} (2016), 2227--2249


\bibitem{ch1}
A. Čap and K. Hu, Bounded Poincaré operators for twisted and BGG complexes, \textit{J. Math. Pures Appl.} \textbf{179} (2023), 253--276


\bibitem{ch2}
A. Čap and K. Hu, BGG Sequences with weak regularity and applications, \textit{Found. Comput. Math.}  \textbf{24} (2024), 1145--1184


\bibitem{cartan}
E.~J. Cartan, Sur la possibilité de plonger un espace Riemannian donné dans un espace Euclidien, \textit{Ann. Soc. Pol. Math.} \textbf{6} (1927), 1--7


\bibitem{add}
Y. Choquet-Bruhat, C. Dewitt-Morette, and M. Dillard-Bleick, \textit{Analysis, Manifolds and Physics}, North-Holland, Amsterdam, 1977

\bibitem{chris}
C. Christoforou, BV weak solutions to Gauss-Codazzi system for isometric immersions, \textit{J. Differential Equ.} \textbf{252} (2012), 2845--2863


\bibitem{chris2}
C. Christoforou, 
Corrugated versus smooth uniqueness and stability of negatively curved isometric immersions, \textit{Quart. Appl. Math.} \textbf{81} (2023), 533--551

\bibitem{ciarlet}
P.~G. Ciarlet, \textit{Mathematical Elasticity}, Volume 1: \textit{Three--Dimensional Elasticity}. North-Holland: Amsterdam, 1988

\bibitem{ciarlet-new}
P.~G. Ciarlet, \textit{An Introduction to Differential Geometry with Applications to Elasticity}, Springer: Dordrecht, 2005

\bibitem{cgm}
P.~G. Ciarlet, L. Gratie,  and C. Mardare, {A new approach to the fundamental theorem of surface theory},  \textit{Arch. Ration. Mech. Anal.} \textbf{188} (2008), 457--473


\bibitem{ciarlet'}
P.~G. Ciarlet and F. Larsonneur, On the recovery of a surface with prescribed first and second
fundamental forms, \textit{J. Math. Pures Appl.} \textbf{81} (2002), 167--185

\bibitem{new1}
P.~G. Ciarlet and C. Mardare, A surface in $W^{2,p}$ is a locally Lipschitz-continuous function of its fundamental forms in $W^{1,p}$ and $L^p$, $p>2$, \textit{J. Math. Pures Appl.} \textbf{124} (2019), 300--318


\bibitem{ccswy}
G.-Q. Chen, J. Clelland, M. Slemrod, D. Wang, and D. Yang, Isometric embedding via strongly symmetric positive systems, \textit{Asian J. Math.} \textbf{22} (2018), 1--40



\bibitem{cg}
G.-Q. G. Chen and T.~P. Giron, Weak continuity of curvature for connections in $L^p$, \textit{ArXiv Preprint} (2021), ArXiv: 2108.13529

\bibitem{cl}
G.-Q. G. Chen and S. Li, {Global weak rigidity of the Gauss--Codazzi--Ricci equations and isometric immersions of Riemannian manifolds with lower regularity}, \textit{J. Geom. Anal.} \textbf{28} (2018), 1957--2007


\bibitem{cl2}
G.-Q. G. Chen and S. Li, Weak continuity of the Cartan structural system and compensated compactness on semi-Riemannian manifolds with lower regularity, \textit{Arch. Ration. Mech. Anal.} \textbf{241} (2021), 579--641


\bibitem{new2}
G.-Q. G. Chen, S. Li, and M. Slemrod, On asymptotic rigidity and continuity problems
in nonlinear elasticity on manifolds and hypersurfaces, \textit{J. Math. Pures Appl.} \textbf{160} (2022), 29--53


\bibitem{csw}
G.-Q. G. Chen, M. Slemrod, and D. Wang, {Weak continuity of the Gauss--Codazzi--Ricci system for isometric embedding}, \textit{Proc. Amer. Math. Soc.} \textbf{138} (2010),  1843--1852


\bibitem{csw'}
G.-Q. G. Chen, M. Slemrod, and D. Wang, Isometric immersions and compensated compactness, \textit{Comm. Math. Phys.} \textbf{294} (2010), 411--437


%\bibitem{clelland}
%J.~N. Clelland, \textit{From Frenet to Cartan: The Method of Moving Frames}. Graduate Studies in Mathematics, vol. 178. American Mathematical Society, Providence (2017)


\bibitem{clms}
R. Coifman, P.-L. Lions, Y. Meyer, and S. Semmes, Compensated compactness and Hardy spaces, \textit{J. Math. Pures Appl.} \textbf{72} (1993), 247--286

\bibitem{cdm}
S. Conti, G. Dolzmann, and S. M\"{u}ller, {Optimal rigidity estimates for maps of a compact Riemannian manifold to itself}, \textit{ArXiv preprint} (2024), arXiv:2402.06448



%\bibitem{csw''}
%G.-Q. G. Chen, M. Slemrod, and D. Wang, A fluid dynamic formulation of the isometric embedding problem in differential geometry, \textit{Q. Appl. Math.} \textbf{68} (2010), 73--80



%\bibitem{cdl}
%X. Ding, G.-Q. G. Chen, and P. Luo, {Convergence of the fractional step Lax--Friedrichs scheme and Godunov scheme for the isentropic system of gas dynamics}, \textit{Comm. Math. Phys.} \textbf{121} (1989), 63--84


\bibitem{doc}
M.~P. do Carmo, \textit{Riemannian Geometry}. Translated from the second Portuguese edition by Francis Flaherty. Mathematics: Theory $\&$ Applications. Birkh\"{a}user Boston, Inc., Boston, MA, 1992

%\bibitem{dk}  S.~K. Donaldson and P.~B. Kronheimer, \textit{The Geometry of Four-Manifolds}, Oxford Mathematical Monographs, Clarendon Press, Oxford, 1990

%\bibitem{ek}
%A. Enciso and N. Kamran, Ramified local isometric embeddings of singular Riemannian metrics, \textit{J. Differential Equations}  \textbf{277} (2021), 38--56

\bibitem{evans}
L.~C. Evans, \textit{Weak Convergence Methods for Nonlinear Partial Differential Equations}, CBMS-RCSM, 74. American Mathematical Society, Providence 1990


\bibitem{fjm}
G. Friesecke, R.~D. James, and S. M\"{u}ller,
{A theorem on geometric rigidity and the derivation of nonlinear plate theory from three dimensional elasticity},
\textit{Comm. Pure Appl. Math.} \textbf{55} (2002), 1461--1506

%\bibitem{giron}
%T.~P. Giron, \textit{On the Analysis of Isometric Immersions of Riemannian Manifolds}, D.Phil. Thesis, University of Oxford, 2021

\bibitem{gor}
R.~D. Gulliver II, R.~Osserman, and H.~L. Royden, A theory of branched immersions of surfaces, \textit{Amer. J. Math.} \textbf{95} (1973), 750--812

%\bibitem{gis}
%A. Grigor'yan, S. Ishiwata, and  L. Saloff-Coste, {Poincaré constant on manifolds with ends}, \textit{Proc. London Math. Soc.} \textbf{126} (2023), 1961--2012

\bibitem{gl}
P. Guan and Y. Li, The Weyl problem with nonnegative Gauss curvature, \textit{J. Differential Geom.} \textbf{39} (1994), 331--342

\bibitem{guanlu}
P. Guan and S. Lu, Curvature estimates for immersed hypersurfaces in Riemannian manifolds, \textit{Invent. Math.} \textbf{208} (2017), 191--215

\bibitem{han}
Q. Han, On the isometric embedding of surfaces with Gauss curvature changing sign cleanly, \textit{Comm. Pure Appl. Math.} \textbf{58} (2005), 285--295

\bibitem{hh}
Q. Han, and J.-X. Hong, \textit{Isometric embedding of Riemannian manifolds in Euclidean spaces}, Mathematical Surveys and Monographs, 130. American Mathematical Society, Providence, RI, 2006

\bibitem{hw}
P. Hartman and A. Wintner, On the fundamental equations of differential geometry, \textit{Amer. J. Math.} \textbf{72} (1950), 757--774


\bibitem{h}
F. H\'{e}lein, \textit{Harmonic Maps, Conservation Laws and Moving Frames}. %Translated from the 1996 French original. With a foreword by James Eells. 
Second edition. Cambridge Tracts in Mathematics, 150. Cambridge University Press, Cambridge, 2002

 
\bibitem{new-hlp}
P. Hornung, M. Lewicka, and M.~R. Pakzad, Infinitesimal isometries on developable surfaces and asymptotic theories for thin
developable shells, \textit{J. Elasticity}, \textbf{111} (2013), 1--19


%\bibitem{iaia}
%J.~A. Iaia, The Weyl problem for surfaces of nonngative curvature, \textit{Lecture Notes in Pure and Applied Mathematics}, Vol. 144: Geometric Analysis and Nonlinear Partial Differential Equations, 1993, pp. 213-220


%\bibitem{khuri}
%M. Khuri, Counterexamples to the local solvability of Monge-Ampere equations in the plane, \textit{Comm. Partial Differential Equations} \textbf{32} (2007), 665--674

%\bibitem{lin}
%C.-S. Lin, The local isometric embedding in $\R^3$ of 2-dimensional Riemannian manifolds with nonnegative curvature, \textit{J. Differential Geom.} \textbf{21} (1985), 213--230


\bibitem{kms}
R. Kupferman, C. Maor, and A. Shachar, {Reshetnyak rigidity of Riemannian manifolds},
\textit{Arch. Ration. Mech. Anal.} \textbf{231} (2019),  367--408

%\bibitem{ks}
%R. Kupferman and A. Shachar, {A geometric perspective on the Piola identity in Riemannian settings}, \textit{J. Geom. Mech.} \textbf{11} (2019), 59--76.


\bibitem{langer}
J. Langer, {A compactness theorem for surfaces with $L_p$-bounded second fundamental form}, \textit{Math. Ann.} \textbf{270} (1985), 223--234




\bibitem{new-l}
M. Lewicka, Quantitative immersability of Riemann metrics and the infinite hierarchy of prestrained shell models,
\textit{Arch. Ration. Mech. Anal.} \textbf{236} (2020), 1677--1707

\bibitem{new-lmp'}
M. Lewicka, L. Mahadevan, and M.~R. Pakzad, The Monge--Amp\`{e}re constraint: matching of isometries, density and regularity, and elastic theories of shallow shells, \textit{Ann. Inst. H. Poincar\'{e} Anal. Non Lin\'{e}aire}, \textbf{34} (2017), 45--67

\bibitem{new-lmp}
M. Lewicka, M.~G. Mora, and M.~R. Pakzad, The matching property of infinitesimal isometries on elliptic surfaces and
elasticity of thin shells,  \textit{Arch. Ration. Mech. Anal.} \textbf{200} (2011), 1023--1050

\bibitem{me-PAMS}
S. Li, {A remark on the non-compactness of $W^{2,d}$-immersions of $d$-dimensional hypersurfaces}, \textit{Proc. Amer. Math. Soc.} \textbf{148}  (2020),  2245--2255

\bibitem{me-ARMA}
S. Li, On the existence of $C^{1,1}$-isometric immersions of several classes of negatively curved surfaces into $\R^3$, \textit{Arch. Ration. Mech. Anal.} \textbf{236} (2020) 419--449

\bibitem{me-BLMS}
S. Li, The Weyl problem of isometric immersions revisited, \textit{Bull. London Math. Soc.} \textbf{53} (2021), 220--230

\bibitem{me-gauge}
S. Li, Smoothability of $L^p$-connections on bundles and isometric immersions with $W^{2,p}$-regularity, \textit{ArXiv preprint} (2020), arXiv:2010.13560 

\bibitem{me-wedge}
S. Li, Wedge product theorem in compensated compactness theory with critical exponents on Riemannian manifolds, \textit{ArXiv preprint} (2023), arXiv:2307.13175 


%\bibitem{lps}
%S. Li, M.~R. Pakzad, and A. Schikorra, Fractional Sobolev isometric immersions of planar domains, to appear in \textit{Ann. Sc. Norm. Sup. Pisa - Classe di Scienze} (2024); arXiv:2103.01723

\bibitem{lslem}
S. Li and M. Slemrod, From the Nash--Kuiper theorem of isometric embeddings to the Euler equations for steady fluid motions: analogues, examples, and extensions, \textit{J. Math. Phys.} \textbf{64} (2023), Paper No. 011511

\bibitem{ls}
S. Li and X. Su, On the fundamental theorem of submanifold theory and isometric immersions with supercritical low regularity, \textit{ArXiv preprint} (2024), arXiv:2405.16249  

\bibitem{cslin}
C.~S. Lin, The local isometric embedding in $\R^3$ of 2-dimensional Riemannian manifolds with nonnegative curvature, \textit{J. Differential Geom.} \textbf{21} (1985), 213--230

\bibitem{litzinger}
F. Litzinger, Optimal regularity for two-dimensional Pfaffian systems and the fundamental theorem of surface theory, \textit{J. Geom. Anal.} \textbf{31} (2021), 2594--2610

\bibitem{add'}
P. Malliavin, \textit{Géométrie Differentielle Intrinsèque}, Hermann, Paris, 1972


\bibitem{m03}
S. Mardare, {The fundamental theorem of surface theory for surfaces with little regularity}, \textit{J. Elasticity} \textbf{73} (2003), 251--290

\bibitem{m05}
S. Mardare, {On Pfaff systems with $L^p$ coefficients and their applications in differential geometry}, \textit{J. Math. Pures Appl.} \textbf{84} (2005), 1659--1692

\bibitem{m07}
S. Mardare, {On systems of first order linear partial differential equations with $L^p$ coefficients}, \textit{Adv. Differential Equations} \textbf{12} (2007), 301--360


\bibitem{morrey}
C.~B. Morrey, \textit{The problem of Plateau on a Riemannian manifold}, \textit{Ann. of Math.} \textbf{49} (1948), 807--851

\bibitem{m}
F. Murat, {Compacit\'{e} par compensation}, \textit{Ann. Scuola Norm. Sup. Pisa Cl. Sci.} \textbf{5} (1978),  489--507

%\bibitem{m08}
%S. Mardare, On Poincar\'{e} and de Rham's theorems, \textit{Rev. Roumaine Math. Pures Appl.} \textbf{53} (2008), 523--541


\bibitem{nashC1}
J.~F. Nash Jr., $C^1$-isometric imbeddings, \textit{Ann. of Math.} \textbf{60} (1954), 383-396


\bibitem{nashCk}
J.~F. Nash Jr., {The imbedding problem for Riemannian manifolds},
\textit{Ann. of Math.} \textbf{63} (1956), 20--63

\bibitem{nir}
L. Nirenberg, The Weyl and Minkowski problems in differential geometry in the large, \textit{Comm. Pure Appl. Math.} \textbf{6} (1953) 337--394

%\bibitem{pakzad}
%M.~R. Pakzad, On the Sobolev space of isometric immersions, \textit{J. Differential Geom.} \textbf{66} (2004), 47--69

\bibitem{pog}
A.~V. Pogorelov, \textit{Extrinsic geometry of convex surfaces}, Mathematical Monographs 35, American Mathematical Society, Providence, RI, 1973

\bibitem{ps}
\`{E}.~G. Poznyak and E.~V. Shikin, Small parameters in the theory of isometric imbeddings of two dimensional Riemannian manifolds in Euclidean spaces, In: \textit{Some Questions of Differential Geometry in the Large, Amer. Math. Soc. Transl. Ser. 2}, \textbf{176} (1996), 151--192, AMS: Providence, RI

%\bibitem{p}
%A.~V. Pogorelov, An example of a two-dimensional Riemannian metric admitting no local realization in $E_3$, \textit{Dokl. Akad. Nauk USSR} \textbf{198} (1971), 42--43

%\bibitem{rt1}
%M. Reintjes and B. Temple, The regularity transformation equations: an elliptic mechanism for smoothing gravitational metrics in general relativity, \textit{Adv. Theor. Math. Phys.} \textbf{24} (2020), 1203--1245
 
 
%\bibitem{rt2}
%M. Reintjes and B. Temple, How to smooth a crinkled map of spacetime: Uhlenbeck compactness for $L^\infty$ connections and optimal regularity for general relativistic shock waves by the Reintjes-Temple-equations, \textit{Proc. R. Soc. A} \textbf{47} (2020), Art. No. 20200177
 
%\bibitem{rt3}
%M. Reintjes and B. Temple, Optimal regularity and Uhlenbeck compactness for general relativity and Yang–Mills theory, \textit{Proc. Roy. Soc. A} \textbf{479} (2023), Art. No. 20220444


%\bibitem{rt4}
%M. Reintjes and B. Temple, On the optimal regularity implied by the assumptions of geometry I: Connections on tangent bundles, \textit{Meth. Appl. Anal.} \textbf{29} (2022), 303--396 



\bibitem{res1}
Yu.~G. Re\u{s}etnjak, {Liouville's conformal mapping theorem under minimal regularity hypotheses},
\textit{Sib. Mat. Zhurnal}, \textbf{8} (1967), 835--840.

\bibitem{res2}
Yu.~G. Re\u{s}etnjak, {On the stability of conformal mappings in multidimensional spaces},
\textit{Sib. Mat. Zhurnal}, \textbf{8} (1967), 91--114.

\bibitem{riviere}
T. Rivière, Conservation laws for conformally invariant variational problems, \textit{Invent. Math.}  \textbf{168} (2007),  1--22


\bibitem{rs}
T. Rivi\`{e}re and M. Struwe, Partial regularity for harmonic maps and related problems, \textit{Comm. Pure Appl. Math.}  \textbf{61} (2008), 451--463

\bibitem{rrt}
J.~W. Robbin, R.~C. Rogers, and B. Temple, {On weak continuity and the Hodge decomposition}, \textit{Trans. Amer. Math. Soc.} \textbf{303} (1987),  609--618

%\bibitem{armin}
%A. Schikorra, A remark on gauge transformations and the moving frame method, \textit{Ann. Inst. H. Poincar\'{e} Anal. Nonlin\'{e}aire} \textbf{27} (2010), 503--515
 
 
%\bibitem{sch}
%G. Schwarz, \textit{Hodge decomposition---a method for solving boundary value problems}. Lecture Notes in Mathematics, 1607. Springer-Verlag, Berlin, 1995


\bibitem{spivak}
M. Spivak, \textit{A Comprehensive Introduction to Differential Geometry}, vol. I--II. Perish, Inc., Boston
(1970) (vol. III--V (1975))

\bibitem{sz}
M. Szopos, An existence and uniqueness result for isometric immersions with little regularity, \textit{Rev. Roum. Math. Pures Appl.} \textbf{53} (2008), 555--565


\bibitem{t1}
L. Tartar, Compensated compactness and applications to partial differential equations, \textit{Nonlinear Analysis and Mechanics: Heriot-Watt Symposium} 4 (R. J. Knops, ed.), Research Notes in Math., Pitman, 1979


\bibitem{t2}
L. Tartar, The compensated compactness method applied to systems of conservation laws, \textit{Systems of Nonlinear Partial Differential Equations} (J. M. Ball, ed.), NATO ASI Series, Reidel, 1983

\bibitem{ten}
K. Tenenblat, {On isometric immersions of Riemannian manifolds}, \textit{Bol. Soc. Brasil. Mat.} \textbf{2} (1971), 23--36


\bibitem{u'}
K.~K. Uhlenbeck, {Removable singularities in Yang--Mills fields}, \textit{Comm. Math. Phys.} \textbf{83} (1982), 11--29

\bibitem{u}
K.~K. Uhlenbeck, {Connections with $L^p$ bounds on curvature}, \textit{Comm. Math. Phys.} \textbf{83} (1982), 31--42



\bibitem{w}
K. Wehrheim, \textit{Uhlenbeck Compactness}. EMS Series of Lectures in Mathematics. European Mathematical Society (EMS), Z\"{u}rich, 2004

%\bibitem{w-new}
%K. Wehrheim, \textit{Anti-self-dual instantons with Lagrangian boundary conditions}, Dr. sc. math. Thesis, Eidgen\"{o}ssische Technische Hochschule Z\"{u}rich (Switzerland), 2002


\bibitem{weyl}
H. Weyl, {\"{U}ber die bestimmung einer geschlossenen konvexen flache durch ihr linienelement}, \textit{Vierteljahrsschrift der Naturforschenden Gesellschaft}, Z\"{u}rich \textbf{61} (1916), 40--72



\bibitem{wy}
T.-T. Wu and C.-N. Yang, Concept of nonintegrable phase factors and global formulation of gauge fields, \textit{Phys. Rev. D} \textbf{12} (1975), 3845


\bibitem{yau}
S.-T. Yau, Problem section, pp. 669--706 in: Seminar on Differential Geometry, vol. 102 of
\textit{Ann. of Math. Stud.}, Princeton Univ. Press, Princeton, N.J., 1982

%\bibitem{yg} A. Yavari and A. Goriely, {Riemann--Cartan geometry of nonlinear dislocation mechanics}, \textit{Arch. Ration. Mech. Anal.} \textbf{205} (2012), 59--118.



\end{thebibliography}
\end{document}